\DeclareMathAlphabet{\mathcal}{OMS}{cmsy}{m}{n}
\DeclareSymbolFont{largesymbols}{OMX}{cmex}{m}{n}
\renewenvironment{itemize}[1]{\begin{compactitem}#1}{\end{compactitem}}
\newtheorem{Def}{Definition}[section]
\newtheorem{Prop}[Def]{Proposition}
\newtheorem{Theo}[Def]{Theorem}
\newtheorem{Lem}[Def]{Lemma}
\newtheorem{Koro}[Def]{Corollary}
\newcommand{\add}{\operatorname{add}}
\newcommand{\Img}{\operatorname{Im}}
\newcommand{\Ker}{\operatorname{Ker}}
\newcommand{\opp}{^{\rm op}}
\newcommand{\Hom}{\operatorname{Hom}}
\newcommand{\HomP}{{\rm Hom}^{\bullet}}
\newcommand{\End}{\operatorname{End}}
\newcommand{\E}{\textsf{E}}
\newcommand{\Oc}[3]{{#1}^{#2,#3}}
\newcommand{\cpx}[1]{#1^{\bullet}}
\newcommand{\D}[1]{\mathscr{D}(#1)}
\newcommand{\Db}[1]{ \mathscr{D}^{\rm b}(#1)}
\newcommand{\C}[1]{\mathscr{C}(#1)}
\newcommand{\Cb}[1]{{\mathscr{C}^b}(#1)}
\newcommand{\K}[1]{\mathscr{K}(#1)}
\newcommand{\Kb}[1]{ \mathscr{K}^{\rm b}(#1)}
\newcommand{\modcat}[1]{#1\mbox{{\rm -mod}}}
\newcommand{\Modcat}[1]{#1\mbox{{\rm -Mod}}}
\newcommand{\pmodcat}[1]{#1\mbox{{\rm -proj}}}
\newcommand{\lra}{\longrightarrow}
\newcommand{\ra}{\rightarrow}
\newcommand{\lraf}[1]{\stackrel{#1}{\lra}}
\newcommand{\raf}[1]{\stackrel{#1}{\ra}}
\def\gh{{\sf gh}}
\def\cogh{{\sf cogh}}
\def\fgh{{\sf F}{\gh}}
\def\fcogh{{{\sf F}\cogh}}
\title{Approximations, ghosts and derived equivalences}
\author{Yiping Chen and Wei Hu$^*$}
\date{}
\begin{document}
\maketitle

\begin{abstract}

Approximation sequences and derived equivalences occur frequently in the research of mutation of tilting objects in representation theory, algebraic geometry and noncommutative geometry.
In this paper, we introduce symmetric approximation sequences in additive categories and weakly $n$-angulated categories which include (higher) Auslander-Reiten sequences (triangles) and mutation sequences in algebra and geometry,
and show that such sequences always give rise to derived equivalences between the quotient rings of endomorphism rings of objects in the sequences modulo some ghost and coghost ideals.

 \end{abstract}

\renewcommand{\thefootnote}{\alph{footnote}}
\setcounter{footnote}{-1} \footnote{ $^*$ Corresponding author.
Email: huwei@bnu.edu.cn; Fax: 0086 10 58808202.}
\renewcommand{\thefootnote}{\alph{footnote}}
\setcounter{footnote}{-1} \footnote{2010 Mathematics Subject
Classification: primary 18E30, 20K40; secondary 18G35,
16G10.}
\renewcommand{\thefootnote}{\alph{footnote}}
\setcounter{footnote}{-1} \footnote{Keywords: Derived equivalence; Tilting complex; $n$-angulated category.}

\section{Introduction}

Derived categories and derived equivalences   were first introduced by Grothendieck and Verdier in early 1960s.
Derived equivalences between rings preserve many significant invariants such as Hochschild (co)homology, cyclic homology,  center and $K$-theory, etc. In general, it is very hard to tell whether two given rings are derived equivalent or not, and is also very difficult to describe the derived equivalence class of a given ring. One idea is to study derived equivalences ``locally'', that is, to establish some elementary derived equivalences between certain nicely related rings, and hope that most derived equivalent rings can reach each other by a sequence of such elementary derived equivalences. Mutation of objects in categories provide many rings of such kind, where approximations play a central role. The mutation procedure reads as follows: let $T:=M\oplus Y$ be an object in an abelian or a triangulated category, and let
 $$X\raf{g} M'\raf{f}Y$$
be an exact sequence or a triangle such that  $f$ is a right $\add(M)$-approximation. The object $T':=M\oplus X$ is called the right mutation of $T$ at $M$. Dually, one has left mutations. In many cases, the morphism $g$ is also a left $\add(M)$-approximation, and the objects $T$ and $T'$ are left and right mutations of each other at the direct summand $M$. The endomorphism rings $\End(T)$ and $\End(T')$ are then related by this mutation procedure. This occurs in many aspects: mutations of exceptional sequences in the coherent sheaf category of varieties \cite{Rudakov1990}, mutations of tilting modules \cite{Happel1998b}, mutations of cluster tilting objects \cite{Buan2006}, mutations of silting objects \cite{Aihara2012},  and mutations of modifying modules \cite{Iyama2014a} in the study of NCCR conjecture. Auslander-Reiten sequences over artin algebras can also be viewed as mutation sequences.

One can ask whether the rings $\End(T)$ and $\End(T')$ are always derived equivalent or not. For an Auslander-Reiten sequence $0\ra X\ra M\ra Y\ra 0$ over an artin algebra, the endomorphism algebras   $\End(M\oplus X)$ and $\End(M\oplus Y)$ are derived equivalent \cite{Hu2011}. Mutation of tilting modules and modifying modules also provide examples where $\End(T)$ and $\End(T')$ are derived equivalent. However, this is not always true. For instance, the endomorphism algebras of cluster tilting objects related by mutation are not always derived equivalent.  Also,  in general, Auslander-Reiten triangles do not give rise to derived equivalences. In \cite{Hu2013a}, for certain triangles, it was proved that the quotient algebras of $\End(T)$ and $\End(T')$ modulo some particularly defined ideals are still derived equivalent. It remains unclear why these ideals naturally occur.

The aim of this paper is to find the general statement behind this phenomenon. In the mutation procedure, the symmetric (left and right) approximation property plays a central role. We shall introduce the notion of {\em symmetric approximation sequence} (Definition \ref{def-sym-approx-sequence}), which can be viewed as a higher mutation sequence and covers all the known mutation sequences and triangles.

Another main ingredient of our results are ghost and coghost ideals.  Let ${\cal C}$ be an additive category, and let ${\cal D}\subset {\cal C}$ be a full subcategory of ${\cal C}$. The
{\em ${\cal D}$-ghost ideal}, denoted by $\gh_{\cal D}$, is the ideal of ${\cal C}$  consisting of all morphisms $f$ in ${\cal C}$ with ${\cal C}({\cal D}, f)=0$. Dually,  the $\mathcal{D}$-coghost ideal, denoted by $\cogh_{\mathcal{D}}$ is the ideal of $\mathcal{C}$ consisting of morphisms $f$ in $\mathcal{C}$ such that ${\mathcal{C}}(f, \mathcal{D})=0$.

Our main result can be described as the following theorem.

\begin{Theo}[=Theorem \ref{theorem-ghosts}]
Let ${\cal C}$ be an additive category and let $M\in {\cal C}$ be an object. Suppose that
$$X\lraf{f_0} M_1\lraf{f_1}\cdots\lraf{f_{n-1}} M_n\lraf{f_{n}} Y$$
is a symmetric $\add(M)$-approximation sequence (see Definition \ref{def-sym-approx-sequence} below). Then the quotient rings
$$\displaystyle{\frac{\End_{\cal C}(M\oplus X)}{\cogh_M(M\oplus X)}}
\quad \mbox{ and }\quad \displaystyle{\frac{\End_{\mathcal{C}}(M\oplus Y)}{\gh_M(M\oplus Y)}}$$
 are derived equivalent.
\label{theorem-ghost-intro}
\end{Theo}

Let us briefly explain the generality of Theorem \ref{theorem-ghosts}. The notion of symmetric approximation sequences covers several notions in the literature:
\begin{itemize}
\item  Auslander-Reiten sequences, mutation sequences of tilting modules and modifying modules.
\item Mutation triangles of exceptional sequences, cluster tilting objects and silting objects.
\item Higher Auslander-Reiten sequences and Auslander-Reiten $n$-angles (when the starting term does not occur as a direct summand of middle terms).
\end{itemize}
As a consequence, Theorem \ref{theorem-ghost-intro} uniformly generalizes several known results: \cite[Theorem 1.1 and Proposition 3.5]{Hu2011}, \cite[Theorem 5.4]{Geis2006} and \cite[Theorem 5.3]{Ladkani2010}. Let us remark that the ghost ideal and the coghost ideal occurring in Theorem \ref{theorem-ghost-intro} are zero when $f_0$ is monic and $f_{n}$ is epic. In this case, one get a derived equivalence between the endomorphism rings. One can also apply Theorem \ref{theorem-ghost-intro} to get derived equivalences from  Auslander-Reiten $n$-angles.

\smallskip
Next we mention that the ideals occurring in Theorem \ref{theorem-ghost-intro} can be chosen to be some smaller ideals when the category $\mathcal{C}$ has a weak $n$-angulated structure.

As a generalization of triangulated categories, Geiss et al \cite{Geiss2013} introduced $n$-angulated categories, which occur widely in cluster tilting theory and are closely related to algebraic geometry and string theory. An $n$-angulated category is an additive category $\mathcal{C}$ together with an automorphism $\Sigma$ and a class of $n$-angles satisfying four axioms (F1), (F2), (F3) and (F4) (see \cite{Geiss2013} for details). The weak $n$-angulated structure we need in this paper can be obtained by dropping the axiom (F4) (the pushout axiom) and the condition (F1)(c) which says that every morphism in $\mathcal{C}$ can be extended to an $n$-angle.  An additive category $\mathcal{C}$ with this weak $n$-angulated structure will be called a {\em weakly $n$-angulated category} in this paper.   Roughly speaking, the relationship between a weakly $n$-angulated category and an $n$-angulated category is like that between an additive category and an abelian category.  In a weakly $n$-angulated category, we do not have the pushout axiom (Octahedral axiom when $n=3$) and we do not require every morphism to be embedded into an $n$-angle. In an additive category, in general,   pushout/pullback does not  exist, and a morphism does not necessarily have a kernel or cokernel.

Let $\mathcal{C}$ be an additive category, and let $\mathcal{D}$ be an additive subcategory of $\mathcal{C}$. We denote by $\mathcal{F}_{\mathcal{D}}$ the ideal of $\mathcal{C}$ consisting of morphisms factorizing through objects in $\mathcal{D}$. The intersection $\gh_{\mathcal{D}}\cap \mathcal{F}_{\mathcal{D}}$ is called the ideal of  {\em factorizable $\mathcal{D}$-ghosts} of $\mathcal{C}$, denoted by $\fgh_{\mathcal{D}}$. Similarly, the intersection $\cogh_{\mathcal{D}}\cap{\mathcal{F}}_{\mathcal{D}}$ is called the ideal of {\em factorizable $\mathcal{D}$-coghosts} of $\mathcal{C}$, denoted by $\fcogh_{\mathcal{D}}$. It turns out that the ideals defined in \cite{Hu2013a} are actually factorizable ghost and coghost ideals.

\begin{Theo}[=Theorem \ref{theorem-Fghosts}]
  Let $({\cal C}, \Sigma)$ be a weakly $n$-angulated category, and let $M$ be an object in ${\cal C}$. Suppose that
  $$X\lraf{f}M_1\lra\cdots\lra M_{n-2}\lraf{g} Y\lraf{\eta} \Sigma X$$
 is an $n$-angle in ${\cal C}$ such that $f$ and $g$ are left and right $\add(M)$-approximations, respectively. Then the quotient rings
 $$\displaystyle{\frac{\End_{\cal C}(M\oplus X)}{\fcogh_M(M\oplus X)}}
\quad \mbox{ and }\quad \displaystyle{\frac{\End_{\mathcal{C}}(M\oplus Y)}{\fgh_M(M\oplus Y)}}$$
 are derived equivalent.
 \label{theorem-Fghost-intro}
\end{Theo}

Let us remark that,  in Theorem \ref{theorem-Fghost-intro}, the sequence $X\raf{f} M_1\ra\cdots\ra M_{n-2}\raf{g}Y$ is also a symmetric $\add(M)$-approximation sequence, and thus Theorem \ref{theorem-ghost-intro} also  applies.

This paper is organized as follows. In Section 2, we make some preparations, including the $\Phi$-orbit construction, ghosts and coghosts.  Section 3 and 4 are devoted to proving Theorem \ref{theorem-ghost-intro} and Theorem \ref{theorem-Fghost-intro} respectively.  Some examples will be given in the final section.

\section{Preliminary}
In this section, we shall recall basic definitions and facts which
are needed in our proofs.

\subsection{Conventions}
Let $\cal C$ be an additive category. For two objects $X, Y$ in $\mathcal{C}$, we denote by $\mathcal{C}(X, Y)$ the set of morphisms from $X$ to $Y$.  The endomorphism ring ${\cal C}(X, X)$ of an object $X$ is denoted by $\End_{\cal C}(X)$.  We write
$\add_{\cal C}(X)$ for the full subcategory of $\cal C$ consisting of all direct
summands of finite direct sums of copies of $X$.  If there is no confusion, we just write $\add(X)$ for $\add_{\cal C}(X)$.
An object $X$ in
$\cal C$ is called an \emph{additive generator} for $\cal C$ if
${\cal C}$ = add$(X)$.
 For two morphisms $f:X\rightarrow Y$ and
$g:Y\rightarrow Z$ in $\cal C$, we write $fg$ for their composite.  But for two functors $F:\mathcal{C}\rightarrow \mathcal{D}$ and
$G:\mathcal{D}\rightarrow\mathcal{E}$ of categories, we write $GF$ for their composite instead of $FG$.  In this setting, it would be convenient to write $(x)\phi$ for the image of an element $x$ under a map $\phi$ between sets. For a morphism $f: X\lra Y$ and an object  $Z$ in ${\cal C}$, the natural morphism ${\cal C}(Z, f): {\cal C}(Z, X)\lra {\cal C}(Z, Y)$, sending $g$ to $gf$, is denoted by $f_*$. That is, $(g)f_*=gf$ for all $g\in{\cal C}(Z, X)$. Dually, the natural morphism ${\cal C}(f, Z)$ is denoted by $f^*$.

\smallskip
All categories in this paper are additive categories, and all functors are additive functors.  Let $\eta: F\ra G$ be a natural transformation between two functors from ${\cal C}$ to ${\cal D}$.  For an object $X\in {\cal C}$,  we denote by $\eta_X$ the morphism from $F(X)$ to $G(X)$  induced by $\eta$.  For functors  $H: {\cal A}\ra {\cal C}$ and $L: {\cal D}\lra {\cal E}$, we have a natural transformation $L\eta H: LFH\lra LGH$ induced by $\eta$.

\smallskip
Let ${\cal C}$ be a category. A functor $F$ from ${\cal C}$ to itself is called an {\em endo-functor} of ${\cal C}$.  If there is another endo-functor $G$ of ${\cal C}$ such that $FG=GF=id_{\cal C}$, where $id_{\cal C}$ is the identity functor of ${\cal C}$, then $F$ is called an {\em automorphism} of ${\cal C}$.  A endo-functor $F$ is called an {\em auto-equivalence} provided that there is another endo-functor $G$ of ${\cal C}$ such that both $FG$ and $GF$ are naturally isomorphic to $id_{\cal C}$.

\subsection{Complexes and derived equivalences}
Let ${\cal C}$ be an additive category. A complex $\cpx{X}$ over ${\cal C}$ is a sequence of morphisms $\cdots\ra X^{i-1}\raf{d_X^{i-1}}X^i\raf{d_X^i} X^{i+1}\raf{d_X^{i+1}}\cdots$ between objects in ${\cal C}$ such that $d_X^id_X^{i+1}=0$ for all $i\in\mathbb{Z}$.  The category of complexes over ${\cal C}$ with morphisms being chain maps is denoted by $\C{\cal C}$.  The homotopy category of complexes over ${\cal C}$ is denoted by $\K{\cal C}$. If ${\cal C}$ is an abelian category, then the derived category of complexes over ${\cal C}$ is denoted by $\D{\cal C}$.  We write $\Cb{\cal C}$, $\Kb{\cal C}$ and $\Db{C}$ respectively for the full subcategories of $\C{\cal C}$, $\K{\cal C}$ and $\D{\cal C}$ consisting of bounded complexes.

It is well known that the categories $\K{\cal C}$ and $\D{\cal C}$ are triangulated categories with $\Kb{\cal C}$ and $\Db{\cal C}$ being their full triangulated  subcategories, respectively.  For basic results on triangulated categories, we refer to Happel's book \cite{Happel1988}. However,  the shifting functor in a triangulated category is written as $\Sigma$ in this paper.

For two complexes $\cpx{X}$ and $\cpx{Y}$ over ${\cal C}$, we write $\HomP_{\cal C}(\cpx{X}, \cpx{Y})$ for the total complex of the double complex with the $(i, j)$-term ${\cal C}(X^{-j}, Y^i)$.

\medskip
Let $\Lambda$ be a ring with identity.  The category of left $\Lambda$-modules, denoted by $\Modcat{\Lambda}$, is an abelian category.  The full subcategory of $\Modcat{\Lambda}$ consisting of finitely generated projective $\Lambda$-modules is denoted by $\pmodcat{\Lambda}$.
Following \cite{Rickard1989a}, two rings $\Lambda$ and $\Gamma$ are said to be {\em derived equivalent} provided that the derived categories $\Db{\Modcat{\Lambda}}$  and $\Db{\Modcat{\Gamma}}$ of bounded complexes are equivalent as triangulated categories. Due to the work of Rickard \cite{Rickard1989a} (see also \cite{Keller1994}), two rings $\Lambda$ and $\Gamma$ are derived equivalent if and only if there is a bounded complex $\cpx{T}$ of finitely generated projective $\Lambda$-modules satisfying the following two conditions,

\smallskip
(a). $\cpx{T}$ is self-orthogonal, that is, $\Kb{\Lambda}(\cpx{T}, \Sigma^i\cpx{T})=0$ for all $i\neq 0$;

(b). $\add(\cpx{T})$ generates $\Kb{\pmodcat{\Lambda}}$ as a triangulated category,

\medskip
{\parindent=0pt such that } $\End_{\Kb{\pmodcat{\Lambda}}}(\cpx{T})$ is isomorphic to $\Gamma$ as rings.  A complex $\cpx{T}$  in $\Kb{\pmodcat{\Lambda}}$ satisfying the above two conditions is called a {\em tilting complex } over $\Lambda$.

\subsection{Admissible sets and $\Phi$-orbit categories}

Let us recall from \cite{Hu2013} and \cite{Hu2013a} the definition of admissible subsets.  A subset $\Phi$ of $\mathbb{Z}$ containing $0$ is called an {\em admissible subset}  provided that the following property holds:

\medskip
   {\em If $i+j+k\in\Phi$ for three elements $i,j,k$ in $\Phi$, then $i+j\in\Phi$ if and only if $j+k\in\Phi$}

\medskip
{\parindent=0pt Typical} examples of admissible subsets of $\mathbb{Z}$ include $n\mathbb{Z}$ and $\{0, 1, \cdots, n\}$.  Suppose that $\Phi$ is an admissible subset of $\mathbb{Z}$. Then $-\Phi:=\{-i|i\in\Phi\}$, $\Phi^{\geq 0}:=\{i\in\Phi| i\geq 0\}$ and $\Phi^{\leq 0}:=\{i\in\Phi|i\leq 0\}$ are all admissible. Let $m$ be an integer. The set $m\Phi:=\{mi|i\in\Phi\}$ is admissible. Moreover, if $m\geq 3$, then the set $\Phi^m:=\{i^m|i\in\Phi\}$ is admissible. Nevertheless, not all subsets of $\mathbb{Z}$ containing zero are admissible. For instance, the set $\{0, 1, 2, 4\}$ is not admissible.

\medskip
Now let ${\cal T}$ be an additive category, and let $F$ be an endo-functor of  ${\cal T}$. If $F$ is not an equivalence, we set $F^i=0$ for all $i<0$. If $F$ is an equivalence, we fix a quasi-inverse $F^{-1}$ of $F$, and set $F^i:=(F^{-1})^{-i}$ for $i<0$.  The functor $F^0$ is defined to be the identity functor on ${\cal T}$.  We can define a category $\Oc{\cal T}{F}{\Phi}$  as follows. The objects in $\Oc{\cal T}{F}{\Phi}$ are the same as ${\cal T}$, and the morphism space $\Oc{\cal T}{F}{\Phi}(X, Y)$ for two objects $X, Y$ is defined to be
$$\bigoplus_{i\in\Phi}{\cal T}(X, F^iY).$$
In \cite{Hu2013a}, for each pair of integers $u$ and $v$, a natural transformation $\chi(u, v)$ from $F^uF^v$ to $F^{u+v}$ is defined, and it is proved that the composition
$${\cal T}(X, F^uY)\times {\cal T}(Y, F^vZ)\lra {\cal T}(X, F^{u+v}Z), $$
sending $(f_u, g_v)$ to $f_u*g_v:=f_uF^{u}(g_v)\chi(u,v)_Z$, is associative.  We refer to \cite[Section 2.3]{Hu2013a} for the details of the natural transformations $\chi(u, v): F^uF^v\lra F^{u+v}$.  As a result, for morphisms $f=(f_i)_{i\in\Phi}\in \Oc{\cal T}{F}{\Phi}(X, Y)$ and $g=(g_i)_{i\in\Phi}\in\Oc{\cal T}{F}{\Phi}(Y, Z)$, the composition
$$(f, g)\mapsto fg:=\left(\sum_{{u,v\in\Phi}\atop {u+v=i}}f_u*g_v\right)_{i\in\Phi}$$
is associative.  Thus $\Oc{\cal T}{F}{\Phi}$ is indeed an additive category, and is called the {\em  {$\Phi$-orbit category}} of ${\cal T}$ under the functor $F$.  The endomorphism ring of an object $X$ in $\Oc{\cal T}{F}{\Phi}$ is denoted by $\E_{\cal T}^{F, \Phi}(X)$, and is called the {\em $\Phi$-Yoneda algebra} of $X$ with respect to $F$.

For each $X, Y\in{\cal T}$, the morphism space $\Oc{\cal T}{F}{\Phi}(X, Y)=\bigoplus_{i\in\Phi}{\cal T}(X, F^iY)$ is $\Phi$-graded.  Every morphism $\alpha\in {\cal T}(X, F^iY)$ can be viewed as a homogeneous morphism in $\Oc{\cal T}{F}{\Phi}(X, Y)$ of degree $i$.

Suppose that $F$ is an auto-equivalence of ${\cal T}$. If both $i$ and $-i$ are in the admissible subset $\Phi$, then $X$ and $F^iX$ are isomorphic in the $\Phi$-orbit category $\Oc{\cal T}{F}{\Phi}$.  Actually, let $f:=\chi(-i, i)_X^{-1}: X\lra F^{-i}(F^iX)$ and $g:=1_{F^iX}: F^iX\lra F^iX$. Considering $f$ as a homogeneous morphism in $\Oc{\cal T}{F}{\Phi}$ of degree $-i$, and $g$ as a homogeneous morphism in $\Oc{\cal T}{F}{\Phi}$ of degree $i$, we have $f*g=1_X$ and $g*f=1_{F^iX}$.

\subsection{Approximations and cohomological approximations}

\medskip
Now we recall some definitions from \cite{Auslander1980}.

\medskip
Let $\cal C$ be a category, and let $\cal D$ be a full subcategory
of $\cal C$, and $X$ an object in $\cal C$. A morphism $f: D\lra X$
in $\cal C$ is called a \emph{right} $\cal D$-\emph{approximation}
of $X$ if $D\in {\cal D}$ and the induced map Hom$_{\cal C}(D',f)$:
Hom$_{\cal C}(D',D)\lra$ Hom$_{\cal C}(D',X)$ is surjective for
every object $D'\in {\cal D}$.  Dually, there is the notion of a \emph{left} $\cal
D$-\emph{approximation}. The subcategory $\cal D$ is called
\emph{contravariantly} (respectively, \emph{covariantly})
\emph{finite} in $\cal C$ if every object in $\cal C$ has a right
(respectively, left) $\cal D$-approximation. The subcategory $\cal
D$ is called \emph{functorially finite} in $\cal C$ if $\cal D$ is
both contravariantly and covariantly finite in $\cal C$.

\medskip
Cohomological approximations were introduced in \cite{Hu2013a}.
Let ${\cal T}$ be an additive category, and let $F$ be a functor from ${\cal T}$ to itself.  Suppose that $\Phi$ is a non-empty subset of $\mathbb{Z}$, and that ${\cal D}$ is a full additive subcategory of ${\cal T}$.  A morphism $f: X\ra D^X$ in ${\cal T}$ with $D^X\in {\cal D}$ is called a {\em left $({\cal D}, F, \Phi)$-approximation} if every morphism $X\ra F^iD$, where $D\in{\cal D}$ and $i\in\Phi$, factorizes through $f$.   In case that $\Phi$ is an admissible subset,  we have the $\Phi$-orbit category $\Oc{\cal T}{F}{\Phi}$, and  that $f$ is a left $({\cal D}, F, \Phi)$-approximation is equivalent to saying that $\Oc{\cal T}{F}{\Phi}(D^X, {\cal D})\ra \Oc{\cal T}{F}{\Phi}(X, {\cal D})$ is surjective, i.e., the morphism $f$, as a homogeneous morphism of degree zero, is a left ${\cal D}$-approximation in the orbit category $\Oc{\cal T}{F}{\Phi}$.

\medskip
In \cite{Hu2013a}, a right $({\cal D}, F, \Phi)$-approximation is defined to be a morphism $g: D_Y\ra Y$ in ${\cal T}$ with $D_Y\in {\cal D}$ such that every morphism from $F^iD$ to $Y$ with $i\in\Phi$ and $D\in {\cal D}$ factorizes through $g$. Unfortunately, this does not fit the $\Phi$-orbit category well: $g$ is NOT a right ${\cal D}$-approximation in the orbit category $\Oc{\cal T}{F}{\Phi}$ in general.  However, when $F$ is an auto-equivalence with a quasi-inverse $F^{-1}$, a  right $({\cal D}, F, -\Phi)$-approximation is still a right ${\cal D}$-approximation in $\Oc{\cal T}{F}{\Phi}$.  Here we re-define a right $({\cal D}, F, \Phi)$-approximation as follows.

\medskip
 {\it A morphism $g: D_Y\ra Y$ in ${\cal T}$ with $D_Y\in {\cal D}$ is called a {\em right $({\cal D}, F, \Phi)$-approximation} if every morphism from $D$ to $F^iY$ with $i\in\Phi$ and $D\in {\cal D}$ factorizes through $F^ig$.  }

\medskip
{\parindent=0pt Suppose} that $\Phi$ is an admissible subset of $\mathbb{Z}$. Thus, a morphism $g: D_Y\ra Y$ in ${\cal T}$ is a right $({\cal T}, F, \Phi)$-approximation if and only if $g$ is a right ${\cal D}$-approximation in the $\Phi$-orbit category $\Oc{\cal T}{F}{\Phi}$, no matter $F$ is an equivalence or not.

\subsection{Ghosts and factorizable ghosts}
\label{subsection-ghosts}

Let   ${\cal C}$ be an additive category.  By an {\em ideal} $\mathcal{I}$ of ${\cal C}$ we mean additive subgroups ${\cal I}(A, B)\subseteq \Hom_{\cal C}(A, B)$ for all $A$ and $B$ in ${\cal C}$, such that the composite $\alpha\beta$ belongs to ${\cal I}$ provided either $\alpha$ or $\beta$ is in ${\cal I}$. We denote ${\cal I}(A, A)$ simply by ${\cal I}(A)$. The quotient category ${\cal C}/{\cal I}$ of ${\cal C}$ modulo an ideal ${\cal I}$ has the same objects as ${\cal C}$ and has morphism space $({\cal C}/{\cal I})(A, B):={\cal C}(A, B)/{\cal I}(A, B)$ for two objects $A$ and $B$.

Let ${\cal D}$ be a full additive subcategory of ${\cal C}$, a morphism $f$ in ${\cal C}$ is called a {\em ${\cal D}$-ghost} provided that ${\cal C}({\cal D}, f)=0$. All ${\cal D}$-ghosts in ${\cal C}$ form an ideal of ${\cal C}$, called the {\em ideal of ${\cal D}$-ghosts} and denoted by $\gh_{\cal D}$.  Dually, a morphism $g$ in ${\cal C}$ is called a {\em ${\cal D}$-coghost} if ${\cal C}(g, {\cal D})=0$, and the ideal consisting of all ${\cal D}$-coghosts is dented by $\cogh_{\cal D}$.

Let $\mathcal{F}_{\mathcal{D}}$ be the ideal of morphisms in $\mathcal{C}$ factorizing through objects in $\mathcal{D}$.  The intersection $\gh_{\mathcal{D}}\cap\mathcal{F}_{\mathcal{D}}$ is called the {\em ideal of factorizable $\mathcal{D}$-ghosts}  of $\mathcal{C}$, denoted by $\fgh_{\mathcal{D}}$.  Similarly, the intersection $\cogh_{\mathcal{D}}\cap\mathcal{F}_{\mathcal{D}}$  is called the {\em ideal of factorizable $\mathcal{D}$-coghosts} of $\mathcal{C}$, denoted by $\fcogh_{\mathcal{D}}$.

\begin{Lem}
  Keeping the notations above, we have the following:

  $(1)$. If $A\in{\cal C}$ admits a right ${\cal D}$-approximation $f_A: D_A\ra A$, then
      $$\gh_{\cal D}(A, B)=\{g\in {\cal C}(A, B)\, |\, f_Ag=0\}.$$

 $(2)$ If $B\in{\cal C}$ admits a left ${\cal D}$-approximation $f^B: B\ra D^B$, then
  $$\cogh_{\cal D}(A, B)=\{g\in {\cal C}(A, B)\, |\, gf^B=0\}.$$

  $(3)$.  If $A\in {\cal D}$, then $\gh_{\cal D}(A, B)=0$ and $\cogh_{\cal D}(A, B)=\fcogh_{\cal D}(A, B)$.

  $(4)$. If $B\in {\cal D}$, then $\cogh_{\cal D}(A, B)=0$ and $\gh_{\cal D}(A, B)=\fgh_{\cal D}(A, B)$.
\label{lemma-ann}
\end{Lem}

 \begin{proof}  (1). Let $g: A\ra B$ be  in $\gh_{\cal D}(A, B)$. Then ${\cal C}({\cal D}, g)=0$, and particularly ${\cal C}(D_A, g)=0$. Consequently $f_Ag=0$. Conversely, let $g$ be in ${\cal C}(A, B)$ such that $f_Ag=0$. It follows that
 $$0={\cal C}(D', f_Ag)={\cal C}(D', f_A)\cdot{\cal C}(D', g)$$ for all $D'\in {\cal D}$. Moreover, since $f_A$ is a right ${\cal D}$-approximation, the morphism ${\cal C}(D', f_A)$ is surjective. Hence ${\cal C}(D', g)=0$ for all $D'\in {\cal D}$, that is, $g\in\gh_{\cal D}(A, B)$.

\smallskip
The proof of (2) is dual to that of (1).

\smallskip
(3). Suppose that $A\in {\cal D}$. The identity map $1_A: A\ra A$ is a right ${\cal D}$-approximation. It follows from (1) that $\gh_{\cal D}(A, B)=0$.
Clearly, all the morphisms in ${\cal C}(A, B)$ factorizes through the object $A$, which is in ${\cal D}$. This implies that $\fcogh_{\cal D}(A, B)=\cogh_{\cal D}(A, B)$. Similarly, we can prove (4).
\end{proof}

\section{Symmetric approximation sequences and derived equivalences}

In this section, we introduce symmetric approximation sequences in additive categories and show that such a sequence always gives rise to a derived equivalence between the quotient rings of certain endomorphism rings modulo ghosts or coghosts (Theorem \ref{theorem-ghosts}). Among the examples of symmetric approximation sequences are  ${\cal D}$-split sequences, ${\cal D}$-split triangles, mutation triangles in cluster tilting theory, higher Auslander-Reiten sequences and higher Auslander-Reiten triangles

\medskip
Let $\mathcal{C}$ be an additive category, and let ${\cal D}$ be an additive full subcategory of ${\cal C}$.  A  {\em right ${\cal D}$-approximation sequence} in ${\cal C}$ is a sequence
  $$D_m\lra D_{m-1}\lra\cdots\lra D_0\ra Y$$
with $D_i\in {\cal D}$ for all $i=0, \cdots, m$ such that applying ${\cal C}(D, -)$ to the sequence results in an exact sequence
 $${\cal C}(D, D_m)\lra {\cal C}(D, D_{m-1})\lra\cdots\lra {\cal C}(D, D_0)\ra {\cal C}(D, Y)\lra 0$$
for all $D\in {\cal D}$. One can define  {\em left ${\cal D}$-approximation sequences} dually.

\begin{Def}
Let ${\cal C}$ be an additive category and let ${\cal D}$ be a  full additive subcategory of ${\cal C}$. A sequence
\begin{align}
X\lraf{f_0}D_1\lraf{f_1}\cdots\lraf{f_{n-1}}D_{n}\lraf{f_{n}}Y\tag{$\dagger$}
\end{align}
 in ${\cal C}$ is called a {\bf symmetric ${\cal D}$-approximation sequence} if the following three conditions are satisfied.

{\rm (1)}  The sequence $D_1\lraf{f_1}\cdots\lraf{f_{n-1}}D_{n}\lraf{f_{n}}Y$ is a right ${\cal D}$-approximation sequence;

{\rm (2)} The sequence $X\lraf{f_0}D_1\lraf{f_1}\cdots\lraf{f_{n-1}}D_{n}$ is a left ${\cal D}$-approximation sequence;

{\rm (3)} The morphism $f_0$ is a pseudo-kernel of $f_1$,  and $f_{n}$ is a pseudo-cokernel of $f_{n-1}$.
\label{def-sym-approx-sequence}
\end{Def}

In Definition \ref{def-sym-approx-sequence}, if  we replace the condition (3) with the following condition

\medskip
(3') {\em The morphism $f_0$ is a kernel of $f_1$, and $f_{n}$ is a cokernel of $f_{n-1}$, }

\medskip
\noindent
then the sequence $(\dagger)$ is called a {\bf higher $\mathcal{D}$-split sequence}. Comparing with the definition of $\mathcal{D}$-split sequence \cite[Definition 3.1]{Hu2011},  a $\mathcal{D}$-split sequence is precisely a sequence $(\dagger)$ with $n=1$ satisfying the conditions (1), (2) and ($3'$) above.

\medskip

The main result in this section  is the following theorem.

\begin{Theo}
Let ${\cal C}$ be an addtive category and let $M\in {\cal C}$ be an object. Suppose that
$$X\lraf{f_0} M_1\lraf{f_1}\cdots\lraf{f_{n-1}} M_n\lraf{f_{n}} Y$$
is a symmetric $\add(M)$-approximation sequence. Then the quotient rings
$$\displaystyle{\frac{\End_{\cal C}(M\oplus X)}{\cogh_M(M\oplus X)}}
\quad \mbox{ and }\quad \displaystyle{\frac{\End_{\mathcal{C}}(M\oplus Y)}{\gh_M(M\oplus Y)}}$$
 are derived equivalent.
\label{theorem-ghosts}
\end{Theo}

The following lemma and its corollary will be useful in the proof of Theorem \ref{theorem-ghosts}.

\begin{Lem}
Let ${\cal C}$ be an additive category, and let  $M$ be an object in ${\cal C}$. Suppose that
$\cpx{P}$:
$$0\lra P^{0}\lraf{d^{0}}P^{1}\lra\cdots\lra P^{n-1}\lraf{d^{n-1}} P^n\lra 0$$
is a complex over ${\cal C}$ such that $P^i\in\add(M)$ for all $i>0$, and  that the following two conditions are satisfied:

\smallskip
$(1)$. $H^i(\HomP_{\cal C}(M, \cpx{P}))=0$ for all $i\neq 0, n$;

$(2)$.  $H^i(\HomP_{\cal C}(\cpx{P}, M))=0$ for all $i\neq -n$.

\smallskip
{\parindent=0pt Then} $\cpx{P}$ is self-orthogonal as a complex both in $\Kb{{\cal C}/\cogh_M}$ and in $\Kb{{\cal C}/\fcogh_M}$.
\label{Lemma-tilting-complex-left}
\end{Lem}

\begin{proof}
For simplicity, we denote by $\overline{\cal C}$ the category ${\cal C}/\cogh_M$, and denote by $\overline{\overline{\cal C}}$ the category ${\cal C}/\fcogh_M$.

If $n=0$, then the problem is trivial. Now we assume that $n>0$.

   It follows from our assumption (2) that $H^0(\HomP_{\cal C}(\cpx{P}, M))=0$, and consequently the map ${\cal C}(d^0, M): {\cal C}(P^1, M)\ra {\cal C}(P^0, M)$ is surjective. Thus, the morphism $d^0$ is a left $\add(M)$-approximation. By Lemma \ref{lemma-ann} (2), one has $\cogh_M(M, P^0)=\{f\in {\cal C}(M, P^0)|fd^0=0\}=\Ker\,{\cal C}(M, d^0)$.   Moreover,  it follows from Lemma \ref{lemma-ann} (3) that $\cogh_M(M, P^0)=\fcogh_M(M, P^0)$.  Hence the canonical functors ${\cal C}\ra \overline{\cal C}\ra\overline{\overline{\cal C}}$ induce isomorphisms
   $${\cal C}(M, P^0)/\Ker\,{\cal C}(M, d^0)\lraf{\pi^0}\overline{C}(M, P^0)\lraf{p^0}\overline{\overline{C}}(M, P^0).$$
Note that for each $i>0$, by Lemma \ref{lemma-ann} (4), we have $\cogh_M(M,P^i)=0=\fcogh_M(M, P^i)$ since $P^i\in\add(M)$. Thus, for each $i>0$, the canonical functors  ${\cal C}\ra \overline{\cal C}\ra\overline{\overline{\cal C}}$ also induce isomorphisms
  $${\cal C}(M, P^i)\lraf{\pi^i}\overline{C}(M, P^i)\lraf{p^i}\overline{\overline{C}}(M, P^i).$$
In this way, we see that the complexes $\HomP_{\overline{C}}(M, \cpx{P})$ and
  $\HomP_{\overline{\overline{C}}}(M, \cpx{P})$ are both isomorphic to the complex
  $$0\lra{\cal C}(M, P^0)/\Ker\,{\cal C}(M, d^0)\lra {\cal C}(M, P^1)\lra \cdots\lra {\cal C}(M, P^n)\lra 0.$$
 By assumption (1), the above complex has zero homology for all degrees not equal to $n$. Hence
 $$H^i(\Hom_{\overline{\cal C}}(M, \cpx{P}))=0=H^i(\HomP_{\overline{\overline{C}}}(M, \cpx{P}))$$ for all $i\neq n$.
 By Lemma \ref{lemma-ann} (4), we have $\cogh_M(P^i, M)=0$ for all $i$, and therefore $\fcogh_M(P^i, M)=0$ for all $i$. Hence the complexes $\HomP_{{\overline{\cal C}}}(\cpx{P}, M)$, $\HomP_{\overline{\overline{\cal C}}}(\cpx{P}, M)$ and $\HomP_{{\cal C}}(\cpx{P}, M)$ are all isomorphic. Hence $H^i(\HomP_{{\overline{\cal C}}}(\cpx{P}, M))=0=H^i(\HomP_{\overline{\overline{\cal C}}}(\cpx{P}, M))$ for all $i\neq -n$ by assumption (2). The lemma then follows from the dual version of the result \cite[Lemma 2.1]{Hoshino2003}.
 \end{proof}

 \begin{Koro}
 Let $\mathcal{C}$ be an additive category and let $M$ be an object in $\mathcal{C}$. Suppose that
 $$X\lraf{f_0}M_1\lra\cdots\lra M_n\lraf{f_{n}} Y$$
 is a symmetric $\add(M)$-approximation sequence in $\mathcal{C}$. Then the complex
 $$0\lra X\lraf{f_0}M_1\lra\cdots\lra M_n\lra 0$$
 is  self-orthogonal as a complex both in $\Kb{{\cal C}/\cogh_M}$ and in $\Kb{{\cal C}/\fcogh_M}$.
 \label{corollary-sym-seq-tilting}
 \end{Koro}
 \begin{proof}
 Taking $\cpx{P}$ to be the complex $0\ra X\ra M_1\ra\cdots\ra M_n\ra 0$ with $X$ in degree zero, by the conditions in Definition \ref{def-sym-approx-sequence}, the complex $\cpx{P}$  satisfies the conditions of Lemma \ref{Lemma-tilting-complex-left}, and the corollary then follows.
 \end{proof}

 \begin{proof}[Proof of  Theorem  \ref{theorem-ghosts}]
 Note that the quotient rings in the theorem are precisely the endomorphism rings of $M\oplus X$ and $M\oplus Y$ in $\mathcal{C}/\cogh_M$ and $\mathcal{C}/\gh_M$ respectively.

 By the definition of symmetric approximation sequences, the sequence
$$0\lra X\lra M_1\lra\cdots\lra M_n\oplus M\lra Y\oplus M\lra 0$$
 is again a symmetric $\add(M)$-approximation sequence.
  Let $\cpx{T}$ be the complex
  $$0\lra X\lra M_1\lra\cdots\lra M_n\oplus M\lra 0$$
with $X$ in degree zero.  Then it follows from Corollary \ref{corollary-sym-seq-tilting} that $\cpx{T}$ is self-orthogonal in $\Kb{{\cal C}/\cogh_M}$.   Using the full embedding
$$\HomP_{{\cal C}/\cogh_M}(M\oplus X, -): \Kb{\add_{{\cal C}/\cogh_M}(M\oplus X)}\lra \Kb{\pmodcat{\End_{{\cal C}/\cogh_M}(M\oplus X)}}, $$
we see that $\cpx{\tilde{T}}:=\HomP_{{\cal C}/\cogh_M}(M\oplus X, \cpx{T})$ is self-orthogonal in $\Kb{\pmodcat{\End_{{\cal C}/\cogh_M}(M\oplus X)}}$. Moreover, $\add(\cpx{\tilde{T}})$ clearly generates $\Kb{\pmodcat{\End_{{\cal C}/\cogh_M}(M\oplus X)}}$ as a triangulated category.  Hence $\cpx{\tilde{T}}$ is a tilting complex over
$\End_{{\cal C}/{\cogh_M}}(M\oplus X)$ with endomorphism ring isomorphic to $\End_{\Kb{{\cal C}/\cogh_M}}(\cpx{T})$.

 By \cite[Theorem 6.4]{Rickard1989a},  it  remains to prove that $\End_{\Kb{{\cal C}/\cogh_M}}(\cpx{T})$ and  $\End_{{\cal C}/\gh_M}(Y\oplus M)$ are isomorphic. Instead of giving a ring isomorphism directly, we prove that there is a surjective ring homomorphism from $\End_{\Cb{{\cal C}}}(\cpx{T})$ to each of the rings,  and show that these two ring homomorphisms have the same kernel.

We denote the differentials of $\cpx{T}$ by $d^i$ and denote by $\tilde{d}^n$ the map $M_n\oplus M\raf{\left[\begin{smallmatrix}f_n &\\ &1\end{smallmatrix}\right]} Y\oplus M$.

Firstly, we show that there is a surjective ring homomorphism $$\theta: \End_{\Cb{\cal C}}(\cpx{T})\lra \End_{{\cal C}/\gh_M}(Y\oplus M).$$
For each chain map $\cpx{g}: \cpx{T}\lra \cpx{T}$ in $\Cb{\cal C}$, since $\tilde{d}^n$ is a pseudo-cokernel of $d^{n-1}$,  there is a morphism $g\in\End_{\cal C}(Y\oplus M)$ such that the following diagram is commutative
$$\xymatrix@M=1mm{
 X\ar[r]^{d^0}\ar[d]_{g^0} & M_1\ar[r]^{d^1}\ar[d]_{g^1} &\cdots\ar[r]^{d^{n-1}} & M_n\oplus M\ar[r]^{\tilde{d}^n}\ar[d]_{g^n} & Y\oplus M\ar@{-->}[d]_{g} \\
 X\ar[r]^{d^0} & M_1\ar[r]^{d^1} &\cdots\ar[r]^{d^{n-1}} & M_n\oplus M\ar[r]^{\tilde{d}^n} & Y\oplus M.
}$$
Moreover, if $g'$ is another morphism in $\End_{\cal C}(Y\oplus M)$ such that $\tilde{d}^ng'=g^n\tilde{d}^n$, then $\tilde{d}^n(g-g')=0$. By definition $\tilde{d}^n$ is a right $\add(M)$-approximation of $Y\oplus M$. Thus, by Lemma \ref{lemma-ann} (1), the morphism $g-g'$ belongs to $\gh_M(Y\oplus M)$. We denote by $\bar{g}$ the corresponding morphism of $g$ in ${\cal C}/\gh_M$.    Defining $\theta(\cpx{g}):=\bar{g}$ gives rise to a ring homomorphism $\theta$ from
$\End_{\Cb{\cal C}}(\cpx{T})$ to $\End_{{\cal C}/\gh_M}(Y\oplus M)$.  We claim that $\theta$ is surjective. Actually, for each $g\in\End_{\cal C}(Y\oplus M)$, it follows from Definition \ref{def-sym-approx-sequence}(1)(3) that there are morphisms $g^i, i=1,\cdots, n$ such that $g^n\tilde{d}^n=\tilde{d}^ng$ and $g^kd^k=d^kg^{k+1}$ for all $k=1,\cdots, n-1$. Since $d^0$ is a pseudo-kernel of $d^1$ by definition, we get a morphism $g^0: X\ra X$ such that $g^0d^0=d^0g^1$. Thus we get a chain map $\cpx{g}$ in $\End_{\Cb{\cal C}}(\cpx{T})$ such that $\theta(\cpx{g})=\bar{g}$.

Secondly, we claim that there is a surjective ring homomorphism
 $$\varphi: \End_{\Cb{\cal C}}(\cpx{T})\lra \End_{\Kb{{\cal C}/\cogh_M}}(\cpx{T}). $$
Actually, we can define $\varphi$ to be the composite of the ring homomorphism from  $$\End_{\Cb{\cal C}}(\cpx{T})\lra \End_{\Cb{{\cal C}/\cogh_M}}(\cpx{T}),$$ induced by the canonical functor ${\cal C}\ra {\cal C}/\cogh_M$, and the canonical surjective ring homomorphism from
$$\End_{\Cb{{\cal C}/\cogh_M}}(\cpx{T})\lra \End_{\Kb{{\cal C}/\cogh_M}}(\cpx{T}).$$
 Let $g^i: T^i\ra T^i, i=0, 1,\cdots, n$ be morphisms in ${\cal C}$  such that $\cpx{g}$ is a chain map in $\Cb{{\cal C}/\cogh_M}$. Then $g^id^i-d^ig^{i+1}: T^i\ra T^{i+1}$ is in $\cogh_M$ for all $i=0, 1, \cdots, n-1$.  Since $T^i\in\add(M)$ for all $i>0$, by Lemma \ref{lemma-ann} (4), we get $g^id^i-d^ig^{i+1}=0$ for all $i=0, 1, \cdots, n-1$. Hence $\cpx{g}$ is a chain map in $\Cb{\cal C}$, and the canonical map from  $\End_{\Cb{\cal C}}(\cpx{T})$ to $ \End_{\Cb{{\cal C}/\cogh_M}}(\cpx{T})$ is surjective. Consequently $\varphi$ is a surjective ring homomorphism.

Finally, we show that $\theta$ and $\varphi$ have the same kernel, which would result in an isomorphism between $ \End_{\Kb{{\cal C}/\cogh_M}}(\cpx{T})$ and $\End_{{\cal C}/\gh_M}(Y\oplus M)$. By definition, a chain map $\cpx{g}: \cpx{T}\lra \cpx{T}$ is in $\Ker\varphi$ if and only if there exist $h^i: T^i\ra T^{i-1}, i=1,\cdots, n$ in ${\cal C}$ such that $g^n-h^nd^{n-1}$, $g^0-d^0h^1$ and $g^i-h^id^{i-1}-d^ih^{i+1}$ are all in $\cogh_M$ for $i=1,\cdots, n-1$.  Using the fact that $T^i\in\add(M)$ for all $i>0$ and that $d^0$ is a left $\add(M)$-approximation of $X$,  one can show, by Lemma \ref{lemma-ann}, that this is equivalent to saying that $g^n-h^nd^{n-1}=0$, $(g^0-d^0h^1)d^0=0$ and $g^i=h^id^{i-1}+d^ih^{i+1}$ for all $i=1,\cdots, n-1$.  Now suppose that $\cpx{g}$ is in $\Ker\varphi$, and let $g: Y\oplus M\ra Y\oplus M$ be in ${\cal C}$ induced by that above commutative diagram, that is, $\theta(\cpx{g})=\bar{g}$. Then $\tilde{d}^ng=g^n\tilde{d}^n=h^nd^{n-1}\tilde{d}^n=0$, and consequently $g\in\gh_M$ by Lemma \ref{lemma-ann} (1), and $\bar{g}=0$. Hence $\Ker\varphi\subseteq \Ker\theta$.  Conversely, suppose that $\cpx{g}$ is a chain map in $\End_{\Cb{{\cal C}}}(\cpx{T})$ such that $\theta(\cpx{g})=\bar{g}=0$.  Then $g^n\tilde{d}^n=\tilde{d}^ng=0$. By Definition \ref{def-sym-approx-sequence} (1), there is a map $h^n: T^n\lra T^{n-1}$ in ${\cal C}$ such that $h^nd^{n-1}=g^n$.  Now $(g^{n-1}-d^{n-1}h^n)d^{n-1}=g^{n-1}d^{n-1}-d^{n-1}g^n=0$. If $n>1$, then by Definition \ref{def-sym-approx-sequence} (1) again, we get a morphism $h^{n-1}:T^{n-1}\lra T^{n-2}$ such that $g^{n-1}-d^{n-1}h^n=h^{n-1}d^{n-2}$.  Using Definition \ref{def-sym-approx-sequence} (1) repeatedly, we get morphisms $h^i: T^i\lra T^{i-1}$ for $i=1, \cdots, n$ such that $g^n=h^nd^{n-1}$ and $g^i=h^id^{i-1}+d^ih^{i+1}$ for all $i=1, \cdots, n$.  Finally, $(g^0-d^0h^1)d^0=g^0d^0-d^0h^1d^0=d^0g^1-d^0(g^1-d^1h^2)=0$.  Hence $\cpx{g}$ is in $\Ker\varphi$, and consequently $\Ker\theta\subseteq\Ker\varphi$.

Altogether, we have shown that $\theta$ and $\varphi$ are surjective ring homomorphisms with the same kernel. Hence $\End_{{\cal C}/\gh_M}(Y\oplus M)$ and $\End_{\Kb{{\cal C}/\cogh_M}}(\cpx{T})$ are isomorphic, and the theorem is proved.
\end{proof}

\begin{Koro}
Let $\mathcal{C}$ be an additive category, and let $M$ be an object in $\mathcal{C}$. Suppose that $$X\lra M_1\lra\cdots\lra M_n\lra Y$$ is a higher $\add(M)$-split sequence. Then $\End_{\mathcal{C}}(M\oplus X)$ and $\End_{\mathcal{C}}(M\oplus Y)$ are derived equivalent.
\label{corollary-higher-D-split-seq}
\end{Koro}

 \medskip

 Let $A$ be a finite dimensional algebra, and let $P$ be a projective $A$-module with $\nu_AP\simeq P$, where $\nu_A$ is the Nakayama functor $D\Hom_A(-, A)$. Suppose that $Y$ is an $A$-module admitting an $\add(P)$-presentation, that is, there is an exact sequence $P_1\raf{f_1} P_0\raf{f_0} Y\ra 0$ in $\modcat{A}$ with $P_i\in\add(P)$ for $i=0, 1$.  Let $P_2\ra \Ker f_1$ be a right $\add(P)$-approximation of $\Ker f_1$, we get a sequence $P_2\raf{f_2} P_1\raf{f_1} P_0\raf{f_0} Y$.  Continuing this process by taking a right $\add(P)$-approximation  $P_i\ra \Ker f_{i-1}$ for $2\leq i\leq n$, we get a complex
$$(\ddagger)\quad \quad 0\lra X\lraf{f_{n+1}} P_n\lraf{f_n}P_{n-1}\lra \cdots\lra P_1\lraf{f_1}P_0\lraf{f_0}Y\lra 0, $$
 where $f_{n+1}$ is the kernel of $f_n$.
 \begin{Koro}
In the above sequence $(\ddagger)$, the algebras $\End_A(P\oplus X)$ and $\End_{A}(P\oplus Y)$ are derived equivalent.
 \label{corollary-nustable}
 \end{Koro}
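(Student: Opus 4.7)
The plan is to apply Theorem \ref{theorem-D-annihilators} with $\mathcal{C} = \modcat{A}$, $M = P$, and $\mathcal{D} = \add(P)$ to the complex $\cpx{Q} : 0 \to X \to P_n \to \cdots \to P_0 \to Y \to 0$ (with $X$ in degree zero and $Y$ in degree $n+2$). Conditions~$(1)$ and~$(3)$ reduce to direct diagram chases. For $(1)$, $\Hom_A(P, \cpx{Q})$ is exact at $Y$ because $P$ is projective and $P_0 \twoheadrightarrow Y$; it is exact at each intermediate $P_i$ precisely because $P_i \to \Ker f_{i-1}$ is a right $\add(P)$-approximation; and it is exact at $X$ since $X \hookrightarrow P_n$ is injective. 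For $(3)$, any $g : X \to P_n$ with $f_n g = 0$ has image in $\Ker f_n = X$ and so factors through the inclusion, giving $H^1(\Hom_A(X, \cpx{Q})) = 0$; dually, any $g : P_0 \to Y$ with $g f_1 = 0$ vanishes on $\mathrm{Im}(f_1) = \Ker f_0$ and factors through $P_0 \twoheadrightarrow Y$, giving $H^{-(n+1)}(\Hom_A(\cpx{Q}, Y)) = 0$.

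Condition~$(2)$ is the one place where $\nu_A P \cong P$ enters. By the Auslander--Reiten--Nakayama isomorphism $\Hom_A(N, \nu_A P) \cong D\Hom_A(P, N)$, natural in $N$, the assumption $\nu_A P \cong P$ yields a natural isomorphism $\Hom_A(-, P) \cong D\Hom_A(P, -)$. Applied termwise to $\cpx{Q}$ it gives an isomorphism of complexes $\Hom_A(\cpx{Q}, P) \cong D\Hom_A(P, \cpx{Q})$ with degrees reversed, so $(2)$ follows by dualizing $(1)$. A byproduct of the same duality is that $\Hom_A(P, f) = 0$ if and only if $\Hom_A(f, P) = 0$ for every morphism $f$ in $\modcat{A}$, so $\lann_{\mathcal{D}} = \rann_{\mathcal{D}}$ as ideals on $\modcat{A}$.

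Theorem \ref{theorem-D-annihilators} then yields a derived equivalence of the quotient rings $\End_{A/\lann_{\mathcal{D}}}(P \oplus X)$ and $\End_{A/\rann_{\mathcal{D}}}(P \oplus Y)$. To upgrade this to the claimed derived equivalence of the full endomorphism rings, it suffices, by $\lann_{\mathcal{D}} = \rann_{\mathcal{D}}$, to check that $\rann_{\mathcal{D}}(P \oplus X, P \oplus X) = 0$ and $\lann_{\mathcal{D}}(P \oplus Y, P \oplus Y) = 0$. A direct block-matrix calculation reduces the first to the vanishing of the reject $\mathrm{Rej}_P(X) := \bigcap_{\alpha : X \to P} \Ker \alpha$, which is immediate from the embedding $X \hookrightarrow P_n \in \add(P)$; and reduces the second to $\mathrm{tr}_P(Y) = Y$, which is immediate because $Y$ is a quotient of $P_0 \in \add(P)$. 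The most delicate point will be ensuring that the Auslander--Reiten--Nakayama duality is natural enough at the level of complexes to underpin both Condition~$(2)$ and the identification $\lann_{\mathcal{D}} = \rann_{\mathcal{D}}$.
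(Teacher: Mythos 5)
Your proposal is correct and follows essentially the same route as the paper: apply Theorem \ref{theorem-D-annihilators} to the constructed complex, check conditions $(1)$ and $(3)$ by direct $\Hom$-exactness arguments, obtain $(2)$ from the natural isomorphism $\Hom_A(-,\nu_AP)\cong D\Hom_A(P,-)$ together with $\nu_AP\cong P$, and then observe that the relevant annihilator ideals vanish so the quotient rings are the full endomorphism rings. The only cosmetic difference is in the last step, where the paper invokes Lemma \ref{Lem-ann} via the surjective right approximation $f_0$ and the injective left approximation $X\hookrightarrow P_n$, while you phrase the same two facts as $\mathrm{Rej}_P(X)=0$ and $\mathrm{tr}_P(Y)=Y$ (your extra observation $\lann_{\cal D}=\rann_{\cal D}$, valid here by the same duality, also conveniently sidesteps the paper's left/right labelling of the annihilators).
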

 \begin{proof}
By definition, the sequence $P_n\raf{f_n}\cdots\ra P_0\raf{f_0}Y$  is a right $\add(P)$-approximation sequence. The assumption $P\simeq\nu_AP$, together with the natural isomorphism $D\Hom_A(P, -)\simeq\Hom_A(-, \nu_AP)$, implies that  $X\raf{f_{n+1}} P_n\raf{f_n} \cdots\raf{f_1}P_0$ is a left $\add(P)$-approximation.  Together with the fact that  $f_0$ is a cokernel of $f_1$ and $f_{n+1}$ is a kernel of $f_n$, we see that the sequence $(\ddagger)$ is a higher $\add(P)$-split sequence. The corollary then follows from Corollary \ref{corollary-higher-D-split-seq}.
\end{proof}

 Corollary \ref{corollary-nustable} provides an easy construction of derived equivalences, as illustrated by the following example.

 \medskip
{\noindent\bf Example}. Let $A$ be the Nakayama algebra give by the quiver
$$\xymatrix{
\bullet \ar[r]^{\alpha} &\bullet \ar[d]^{\beta}\\
\bullet \ar[u]^{\delta} &\bullet \ar[l]^{\gamma}\\
}$$
with relations $\alpha\beta\gamma\delta\alpha=\beta\gamma\delta\alpha\beta=\gamma\delta\alpha\beta\gamma=\delta\alpha\beta\gamma\delta=0$. We denote by $P_i$ the indecomposable projective $A$-module corresponding to the vertex $i$. Let $P=P_1\oplus P_3$, and let $Y$ be the module $\begin{smallmatrix}1\\2\end{smallmatrix}$, which admits an $\add(P)$-presentation $P_3\ra P_1\ra Y\ra 0$.  Using the method in Corollary \ref{corollary-nustable}, we can construct a sequence
$$0\lra X\lra P_3\lraf{f_2} P_3\lraf{f_1} P_1\lraf{f_0} Y\lra 0,$$
 where $X$ is the is the module
   $\begin{smallmatrix} 4\\1\\2\\3\end{smallmatrix}$. Note that the above sequence is not exact at the right $P_3$.  Using Corollary \ref{corollary-nustable}, we can deduce that $\End_A(P_1\oplus P_3\oplus Y)$ and $\End_A(P_1\oplus P_3\oplus X)$ are derived equivalent.

\section{Symmetric approximation sequences in weakly $n$-angulated categories}

Theorem \ref{theorem-ghosts} tells us that symmetric approximations sequences in arbitrary additive categories give rise to derived equivalences between quotient rings of endomorphism rings modulo ghost ideals and coghost ideals.  In this section, we will see that, if the category $\mathcal{C}$ in Theorem \ref{theorem-ghosts} has some ``weak'' $n$-angulated structure and the symmetric approximation sequence is an $n$-angle in $\mathcal{C}$, then the ideals can be chosen to be ideals of factorizable ghosts and coghosts respectively.

The notion of $n$-angulated category is given in \cite{Geiss2013} as a generalization of triangulated categories (In this case $n=3$). Typical examples of $n$-angulated categories include certain $(n-2)$-cluster tilting subcategories in a triangulated category, which appear in recent cluster tilting theory. The ``weak'' $n$-angulated structure we need in this section is obtained from the definition of $n$-angulated categories \cite{Geiss2013} by dropping some axioms.

\begin{Def}
Let $n\geq 3$ be an integer.
A {\bf weakly $n$-angulated category} is an additive category ${\cal C}$ together with an automorphism $\Sigma$ of ${\cal C}$, and a class $\pentagon$ of $n$-angles of the form
$$X_1\lraf{f_1} X_2\lraf{f_2}\cdots\lraf{f_{n-1}} X_n\lraf{f_n} \Sigma X_1$$
satisfying the following axioms:

${\rm (F1')}$. For each $X\in {\cal C}$, the sequence $X\lraf{1_X}X\lra 0\lra\cdots\lra 0\lra \Sigma X$  belongs to $\pentagon$. The class $\pentagon$ is closed under taking direct sums and direct summands, and is closed under isomorphisms.

${\rm (F2)}$. $X_1\raf{f_1} X_2\raf{f_2}\cdots\raf{f_{n-1}}X_n\raf{f_n}\Sigma X_1$ is in $\pentagon$ if and only if so is $X_2\raf{f_2}\cdots\raf{f_{n-1}} X_n\raf{f_n}\Sigma X_1\lraf{(-1)^n\Sigma f_1}\Sigma X_2$.

${\rm (F3)}$.  For each commutative diagram
$$\xymatrix@M=1mm{
X_1\ar[r]^{f_1}\ar[d]^{h_1} &X_2\ar[r]^{f_2}\ar[d]^{h_2} & X_3\ar[r]^{f_3}\ar@{..>}[d]^{h_3} &\cdots\ar[r]^{f_{n-1}} &X_n\ar@{..>}[d]^{h_n}\ar[r]^{f_n} &\Sigma X_1\ar[d]^{\Sigma h_1}\\
Y_1\ar[r]^{g_1} &Y_2\ar[r]^{g_2} &Y_3\ar[r]^{g_3} &\cdots\ar[r]^{g_{n-1}} &Y_n\ar[r]^{g_n} &\Sigma Y_1\\
}$$
with rows in $\pentagon$ can be completed to a morphism of $n$-angles.
\label{Definition-weakly-n-angle}
\end{Def}

The axioms in Definition \ref{Definition-weakly-n-angle} are obtained from the axioms (F1), (F2), (F3) and (F4) in the definition of   $n$-angulated categories in \cite{Geiss2013} by dropping the pushout axiom (F4) and by dropping the condition (F1)(c)  (each morphism can be embedded into an $n$-angle) to obtain (F1$'$).

\medskip
{\it\noindent Remark.} (a)
The relationship between weakly $n$-angulated categories and $n$-angulated categories is like the relationship between additive categories and abelian categories. In an abelian category,  pullback and pushout always exist, and every morphism has a kernel and a cokernel, while additive categories do not have these properties in general. Correspondingly,  an $n$-angulated category has a pushout axiom (F4), and every morphism can be embedded into an $n$-angle. However, a weakly $n$-angulated category does not necessarily have these properties.

 (b). Just like additive categories, the axioms of Definition \ref{Definition-weakly-n-angle} can be easily satisfied by many full subcategories of $n$-angulated categories.  Suppose that  $({\cal C}, \pentagon)$ is a weakly $n$-angulated category, and that ${\cal C}'$ is a full additive  subcategory of ${\cal C}$  such that $\Sigma C'=C'$. Denote by $\pentagon'$ the intersection $\pentagon\cap {\cal C}'$. Then it is easy to see that $({\cal C}', \pentagon')$ is again a weakly $n$-angulated category.  In particular, every full additive  subcategory of an $n$-angulated category closed under $\Sigma$ is weakly $n$-angulated.

\medskip
An additive covariant functor $H$ from a weakly $n$-angulated category $({\cal C}, \pentagon)$ to $\Modcat{\mathbb{Z}}$ is called {\em  cohomological}, if whenever $$X_1\lraf{f_1} X_2\lraf{f_2}\cdots\lraf{f_{n-1}} X_n\lraf{f_n} \Sigma X_1$$
is an $n$-angle in $\pentagon$, the long sequence
$$\cdots\lra H(\Sigma^iX_1)\lraf{H(\Sigma^if_1)}H(\Sigma^iX_2)\lraf{H(\Sigma^if_2)}\cdots\lraf{H(\Sigma^if_{n-1})} H(\Sigma^iX_n)\lraf{H(\Sigma^if_n)}H(\Sigma^{i+1}X_1)\lra\cdots$$
is exact.  Dually we have {\em contravariant cohomological functors}.


\begin{Lem}
  Let ${\cal C}$ be a weakly $n$-angulated category, and let
  $$X_1\lraf{f_1} X_2\lraf{f_2}\cdots\lraf{f_{n-1}} X_n\lraf{f_n} \Sigma X_1$$
  be an $n$-angle in ${\cal C}$. Then we have the following:

  $(1)$. $f_if_{i+1}=0$ for all $i=1,2,\cdots, n-1$;

  $(2)$.  ${\cal C}(X, -)$ and ${\cal C}(-, X)$ are cohomological;

  $(3)$. Suppose that $2\leq i< n$. Each commutative diagram
 $$\xymatrix@M=1mm{
X_1\ar[r]^{f_1}\ar[d]^{h_1} &X_2\ar[r]^{f_2}\ar[d]^{h_2} &\cdots\ar[r]^{f_{i-1}}& X_i\ar[r]^{f_i}\ar[d]^{h_i}&X_{i+1}\ar[r]^{f_{i+1}}\ar@{..>}[d]^{h_{i+1}} &\cdots\ar[r]^{f_{n-1}} &X_n\ar@{..>}[d]^{h_n}\ar[r]^{f_n} &\Sigma X_1\ar[d]^{\Sigma h_1}\\
Y_1\ar[r]^{f_1} &Y_2\ar[r]^{g_2} &\cdots\ar[r]^{g_{i-1}}&Y_i\ar[r]^{g_i} &Y_{i+1}\ar[r]^{g_{i+1}}&\cdots\ar[r]^{g_{n-1}} &Y_n\ar[r]^{g_n} &\Sigma Y_1\\
}$$
with rows in $\pentagon$ can be completed in ${\cal C}$ to a morphism of $n$-angles
\label{Lemma-n-angle}
  \end{Lem}

\begin{proof}
The proof is almost the same as in \cite{Geiss2013}.
 \end{proof}

\begin{Theo}
  Let $({\cal C}, \Sigma)$ be a weakly $n$-angulated category ($n\geq 3$), and let $M$ be an object in ${\cal C}$. Suppose that
  $$X\lraf{f}M_1\lra\cdots\lra M_{n-2}\lraf{g} Y\lraf{\eta} \Sigma X$$
 is an $n$-angle in ${\cal C}$ such that $f$ and $g$ are left and right $\add(M)$-approximations, respectively. Then the quotient rings
 $$\displaystyle{\frac{\End_{\cal C}(M\oplus X)}{\fcogh_M(M\oplus X)}}
\quad \mbox{ and }\quad \displaystyle{\frac{\End_{\mathcal{C}}(M\oplus Y)}{\fgh_M(M\oplus Y)}}$$
 are derived equivalent.
 \label{theorem-Fghosts}
\end{Theo}

 \begin{proof}
The proof is similar to that of Theorem \ref{theorem-ghosts}.
By  Definition \ref{Definition-weakly-n-angle}, the sequence
$$X\lraf{f}M_1\lra\cdots\lra M_{n-2}\oplus M\lraf{\tilde{g}} Y\oplus M\lraf{\tilde{\eta}} \Sigma X,$$
where $\tilde{g}=\left[\begin{smallmatrix}
g&0\\0&1\end{smallmatrix}
\right]$ and $\tilde{\eta}:=\left[\begin{smallmatrix}\eta\\0 \end{smallmatrix}\right]$, is still an $n$-angle.  Moreover, the morphism $\tilde{g}$ is still a right $\add(M)$-approximation. Thus, by our assumptions together with Lemma \ref{Lemma-n-angle}, it is easy to check that
$$X\lraf{f}M_1\lra\cdots\lra M_{n-2}\oplus M\lraf{\tilde{g}}Y\oplus M$$
is a symmetric $\add(M)$-approximation sequence.

 We denote by $\cpx{T}$ be the complex
$$0\lra X\lraf{f}M_1\lra\cdots\lra M_n\oplus M\lra 0$$
with $X$ in degree zero.  Then by Corollary \ref{corollary-sym-seq-tilting}, the complex $\cpx{T}$ is self-orthogonal in $\Kb{\mathcal{C}/\fcogh_M}$.
As we have done similarly in the proof of Theorem \ref{theorem-ghosts}, it is easy to prove that ${\cpx{\tilde{T}}}:=\HomP_{\mathcal{C}/\fcogh_M}(M\oplus X, \cpx{T})$  is a tilting complex over $\End_{\mathcal{C}/\fcogh_M}(M\oplus X)$. It remains to show that the endomorphism ring of $\cpx{\tilde{T}}$, which is isomorphic to $\End_{\mathcal{C}/\fcogh_M}(\cpx{T})$, is isomorphic to $\End_{\mathcal{C}/\fgh_M}(Y\oplus M)$.

 \medskip
Firstly, for each chain map $\cpx{u}$ in $\End_{\Cb{\cal C}}(\cpx{T})$, by Lemma \ref{Lemma-n-angle} (3),  there is a morphism $u\in\End_{\cal C}(Y\oplus M)$ such that the diagram
$$\quad \xymatrix{
T^0\ar[r]^{d_T^0}\ar[d]^{u^0}   &T^1\ar[r]\ar[r]^{d_T^1}\ar[d]^{u^1} & \cdots\ar[r]^{d_T^{n-1}} &T^{n}\ar[d]^{u^n}\ar[r]^{\tilde{g}}& Y\oplus M\ar[r]^{\tilde{\eta}}\ar@{-->}[d]^{u} &\Sigma T^0\ar[d]^{\Sigma u^0\quad\quad(\bigstar)}\\
T^0\ar[r]^{d_T^0} &T^1\ar[r]^{d_T^1} & \cdots\ar[r]^{d_T^{n-1}} &T^{n}\ar[r]^{\tilde{g}}& Y\oplus M\ar[r]^{\tilde{\eta}} &\Sigma T^0\\
}$$
is commutative. If $u'$ is another morphism in $\End_{\cal C}(Y\oplus M)$ making the above diagram commutative, then $\tilde{g}(u-u')=0=(u-u')\tilde{\eta}$.  Since $\tilde{g}$ is a right $\add(M)$-approximation by our assumption, the morphism $(u-u')$ belongs to $\gh_M(Y\oplus M)$ by Lemma \ref{lemma-ann} (1). It follows from $(u-u')\tilde{\eta}=0$ that $u-u'$ factorizes through $T^n$, which is in $\add(M)$. Hence $u-u'$ is in $\fgh_M(Y\oplus M)$. Denote by $\bar{u}$ the morphism in ${\cal C}/\fgh_M$ corresponding to $u$. Thus, we get a map
$$\theta: \End_{\Cb{\cal C}}(\cpx{T})\lra \End_{{\cal C}/\fgh_M}(Y\oplus M)$$
sending $\cpx{f}$ to $\bar{u}$., which is clearly a ring homomorphism.  For each $u\in\End_{\cal C}(Y\oplus M)$, since $\tilde{g}$ is a right $\add(M)$-approximation, there is $u^n: T^n\lra T^n$ such that $\tilde{g}u=u^n\tilde{g}$. Thus, by the axioms (2) and (3) in Definition \ref{Definition-weakly-n-angle}, we get morphisms $u^i: T^i\lra T^i, i=0, \cdots, n$, making the above diagram commutative. This shows that $\theta$ is a surjective ring homomorphism.

Secondly, similar to the proof of Theorem \ref{theorem-ghosts}, one can prove that there is a surjective ring homomorphism
$$\varphi: \End_{\Cb{\cal C}}(\cpx{T})\lra \End_{\Kb{{\cal C}/\fcogh_M}}(\cpx{T}), $$
which is  the composite of the ring homomorphism
$$\End_{\Cb{\cal C}}(\cpx{T})\ra\End_{\Cb{{\cal C}/\fcogh_{\cal D}}}(\cpx{T})$$
 induced by the canonical functor ${\cal C}\ra {\cal C}/\fcogh_M$ and the canonical surjective ring homomorphism from $$\End_{\Cb{{\cal C}/\fcogh_M}}(\cpx{T})\lra\End_{\Kb{{\cal C}/\fcogh_M}}(\cpx{T}).$$

We have to show that $\theta$ and $\varphi$ have the same kernel. A chain map $\cpx{u}$ is in $\Ker\varphi$ if and only if there exist $h^i: T^i\ra T^{i-1}, i=1,\cdots, n$ in ${\cal C}$ such that  $u^0-d_T^0h^1$, $u^i-h^id_T^{i-1}-d_T^ih^{i+1}, i=1,\cdots, n-1$, and $u^n-h^nd_T^{n-1}$ are all in $\fcogh_M$.  Using the fact that $T^i\in\add(M)$ for all $i>0$ and that $d_T^0=f$ is a left $\add(M)$-approximation of $X$,  one can see, by Lemma \ref{lemma-ann}, that this is equivalent to saying that $u^n-h^nd_T^{n-1}=0$,  $u^i=h^id_T^{i-1}+d_T^ih^{i+1}$ for $i=1,\cdots, n-1$, and $u^0-d_T^0h^1\in\fcogh_M$.

 Let $\cpx{u}$ be in $\Ker\varphi$, and that  $u\in\End_{\cal C}(Y\oplus M)$ fits the commutative diagram $(\bigstar)$ above. Then $\theta(\cpx{u})=\bar{u}$.
We have $u^n=h^nd_T^{n-1}$, and consequently $\tilde{g}u=u^n\tilde{g}=h^nd_T^{n-1}\tilde{g}$, which is zero by Lemma \ref{Lemma-n-angle} (1).  It follows from Lemma \ref{lemma-ann} (1) that $u\in\gh_M$. The fact $\cpx{u}\in\Ker\varphi$ also implies that $u^0-d_T^0h^1\in\fcogh_M$.  In particular, the morphism $u^0-d_T^0h^1$ factorizes through an object in $\add(M)$.  Assume that $u^0-d_T^0h^1=ab$ for some $a\in{\cal C}(T^0, M')$ and $b\in{\cal C}(M', T^0)$ with $M'\in\add(M)$. Since $d_T^0$ is a left $\add(M)$-approximation, we see that $a$ factorizes through $d_T^0$, and hence $u^0-d_T^0h^1$ factorizes through $d_T^0$. Consequently, the morphism $u^0$ also factorizes through $d_T^0$, say, $u^0=d_T^0\alpha$.  Thus $\tilde{\eta}(\Sigma u^0)=\tilde{\eta}(\Sigma d_T^0)(\Sigma\alpha)$, which must be zero by the axiom (F2) in Definition \ref{Definition-weakly-n-angle} and  Lemma \ref{Lemma-n-angle} (1). Hence $u\tilde{\eta}=\tilde{\eta} (\Sigma u^0)=0$, and consequently $u$ factorizes through $T^n\in\add(M)$ by Lemma \ref{Lemma-n-angle} (2).  Altogether, we have shown that  $u$ belongs to $\fgh_M$. It follows that $\bar{u}=0$ and $\cpx{u}\in\Ker\theta$.  Hence $\Ker\varphi\subseteq\Ker\theta$.

  Conversely, suppose that $\cpx{u}\in\Ker\theta$ and $u\in\End_{\cal C}(Y\oplus M)$ fits the commutative diagram $(\bigstar)$.  Then $\theta(\cpx{u})=\bar{u}=0$, that is, $u\in\fgh_M(Y\oplus M)$.  Since $\tilde{g}$ is a right ${\cal D}$-approximation, by Lemma \ref{lemma-ann} (1), we have $\tilde{g}u=0$. Thus $u^n\tilde{g}=0$. By Lemma \ref{Lemma-n-angle} (2), there is a morphism $h^n: T^n\ra T^{n-1}$ such that $u^n=h^nd_T^{n-1}$. Now $(u^{n-1}-d_T^{n-1}h^n)d_T^{n-1}=u^{n-1}d_T^{n-1}-d_T^{n-1}u^n=0$. If $n\geq 2$, then, by Lemma \ref{Lemma-n-angle} (2), there is a morphism $h^{n-1}: T^{n-1}\ra T^{n-2}$ such that $u^{n-1}-d_T^{n-1}h^n=h^{n-1}d_T^{n-2}$. Moreover, $(u^{n-2}-d_T^{n-2}h^{n-1})d_T^{n-2}=d_T^{n-2}u^{n-1}-d_T^{n-2}h^{n-1}d_T^{n-2}=d_T^{n-2}d_T^{n-1}h^n=0$.  Repeating this process, we get $h^i: T^i\ra T^{i-1}, i=1,\cdots, n$ such that $u^n=h^nd_T^{n-2}$, $u^i=h^id_T^{i-1}+d_T^ih^{i+1}$ for $i=1,\cdots, n-1$, and $(u^0-d_T^0h^1)d_T^0=0$. Since $d_T^0$ is a left $\add(M)$-approximation, we deduce from Lemma \ref{lemma-ann} (2) that $u^0-d_T^0h^1\in\cogh_M(T^0)$.  Since $u$ factorizes through an object in $\add(M)$ and $\tilde{g}$ is a right $\add(M)$-approximation, it is easy to see that $u$ factorizes through $\tilde{g}$, and thus $\tilde{\eta}(\Sigma u^0)=u\tilde{\eta}=0$.
By  Lemma \ref{Lemma-n-angle} (2) and axiom (F2) in Definition \ref{Definition-weakly-n-angle}, the morphism $\Sigma u^0$ factorizes through $\Sigma d_T^0$, or equivalently,  $u^0$ factorizes through $d_T^0$. Hence  $u^0-d_T^0h^1$ factorizes through an object $\add(M)$, and consequently belongs to $\fcogh_M$. Thus we have shown that $\cpx{u}\in\Ker\varphi$, and  $\Ker\theta\subseteq\Ker\varphi$.

Thus $\theta$ and $\varphi$ have the same kernel, and the rings $\End_{\Kb{{\cal C}/\fcogh_M}}(\cpx{T})$ and $\End_{{\cal C}/\fgh_M}(Y\oplus M)$ are isomorphic, and the theorem then follows.
\end{proof}

Let $({\cal T}, \pentagon)$ and  $({\cal T}', \pentagon')$ be weakly $n$-angulated categories. An additive functor $F$ from ${\cal T}$ to ${\cal T}'$ is called an {\em $n$-angle functor} if  there is a natural isomorphism $\psi: \Sigma'F\ra F\Sigma $ and
$$F(X_1)\lraf{F(f_1)}F(X_2)\lraf{F(f_2)}\cdots\lraf{F(f_{n-1})}F(X_n)\lraf{F(f_n)\psi^{-1}_{X_1}}\Sigma'F(X_1)$$
is in $\pentagon'$ whenever
$$X_1\lraf{f_1}X_2\lraf{f_2}\cdots\lraf{f_{n-1}}X_n\lraf{f_n}\Sigma X_1$$
is in $\pentagon$.

\medskip
Now let $({\cal T}, \pentagon)$ be a weakly $n$-angulated category, and let $F$ be an $n$-angle functor from ${\cal T}$ to itself.  Suppose that $\Phi$ is an admissible subset of $\mathbb{Z}$, and $\Oc{\cal T}{F}{\Phi}$ is the $\Phi$-orbit category of ${\cal T}$. Then one may ask whether the $\Phi$-orbit category is again  naturally weakly $n$-angulated.  The answer is yes, as we shall prove in the following.

We fix a natural isomorphism $\psi(1): \Sigma F\lra F\Sigma$, and set $\psi(0):=id_{\Sigma}: \Sigma\ra \Sigma$.   For each positive integer $u$, we define $\psi(u): \Sigma F^u\lra F^u\Sigma$ to be the composite
$$\Sigma F^u\lraf{\psi(1){F^{u-1}}}F\Sigma F^{u-1}\lraf{F\psi(1){F^{u-2}}}F^2\Sigma F^{n-2}\lra\cdots\lraf{F^{u-1}\psi(1)} F^u\Sigma.$$
If $F$ is not an equivalence, then $F^{-1}=0$, and we define $\psi(u): \Sigma F^u\lra F^u\Sigma$ to be zero for all negative integers.  If $F$ is an equivalence, and $F^{-1}$ is a quasi-inverse of $F$, then $(F^{-1}, F)$ is an adjoint pair. Let $\epsilon: id_{\cal T}\lra F^{-1}F$ be the unit and let $\eta: FF^{-1}\lra id_{\cal T}$ be the counit. We define $\psi(-1)$ to be the composite
$$\Sigma F^{-1}\lraf{\epsilon {\Sigma F^{-1}}}F^{-1}F\Sigma F^{-1}\lraf{F^{-1}\psi(1)^{-1}{F^{-1}}}F^{-1}\Sigma FF^{-1}\lraf{F^{-1}\Sigma\eta}F^{-1}\Sigma, $$
and define $\psi(u)$, for each integer $u<0$, to be the composite
$$\Sigma F^{u}\lraf{\psi(-1){F^{u+1}}}F^{-1}\Sigma F^{u+1}\lraf{F^{-1}\psi(-1){F^{u+2}}}F^{-2}\Sigma F^{u+2}\lra\cdots\lra F^u\Sigma.$$
With these natural transformations in hand, we can define an automorphism $\Sigma^{\Phi}$ of $\Oc{\cal T}{F}{\Phi}$ as follows. $\Sigma^{\Phi} X$ is just $\Sigma X$ for each object $X$. For each homogeneous morphism $f_u: X\lra F^uY$ in $\Oc{\cal T}{F}{\Phi}$, we define $\Sigma^{\Phi}(f_u)$ to be the composite
$$\Sigma X\lraf{\Sigma f_u}\Sigma F^uY\lraf{\psi(u)_Y}F^u\Sigma Y.$$
One can check that $\Sigma^{\Phi}$ is indeed an automorphism of the $\Phi$-orbit category $\Oc{\cal T}{F}{\Phi}$.  Let $\pentagon^{\Phi}$ be the sequences in $\Oc{\cal T}{F}{\Phi}$ isomorphic to those $n$-angles in $\pentagon$.

\begin{Prop}
Keeping the notations above, the $\Phi$-orbit category $\Oc{\cal T}{F}{\Phi}$, together with $\Sigma^\Phi$ and $\pentagon^{\Phi}$, is a weakly $n$-angulated category.
\label{prop-phi-orbit-n-angle}
\end{Prop}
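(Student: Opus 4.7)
The plan is to check, in sequence: (i) $\Sigma^\Phi$ defines an automorphism of $\Oc{\cal T}{F}{\Phi}$, and (ii) $\pentagon^\Phi$ verifies axioms (1)--(3) of Definition \ref{Definition-weakly-n-angle}.

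For (i), $k$-additivity is immediate from the definitions. The substantive point is compatibility with composition: for homogeneous $f_u\in{\cal T}(X,F^uY)$ and $g_v\in{\cal T}(Y,F^vZ)$, I would unwind both $\Sigma^\Phi(f_u*g_v)$ and $\Sigma^\Phi(f_u)*\Sigma^\Phi(g_v)$ using the formulas $f_u*g_v=f_uF^u(g_v)\chi(u,v)_Z$ and $\Sigma^\Phi(f_u)=\Sigma(f_u)\psi(u)_Y$. Equating the two sides reduces to a cocycle-type identity relating $\psi(u+v)$, $\psi(u)$, $\psi(v)$ and $\chi(u,v)$, together with the naturality of $\psi(u)$ at the morphism $\chi(u,v)_Z$; both are established by straightforward inductions on $|u|$ and $|v|$ from the base cases $u,v\in\{-1,0,1\}$ using the inductive definition of $\psi(u)$. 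Since $\Sigma$ is assumed to be an automorphism of ${\cal T}$, repeating the construction with $\Sigma^{-1}$ and $\psi(u)^{-1}$ produces a strict two-sided inverse on both objects and morphisms, so $\Sigma^\Phi$ is genuinely an automorphism.

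For axioms (1) and (2) the verification is direct. Because $\psi(0)=\mathrm{id}_\Sigma$, the functor $\Sigma^\Phi$ acts on degree-zero morphisms exactly as $\Sigma$ does, so every $n$-angle in $\pentagon$ can be viewed unchanged as a sequence in $\Oc{\cal T}{F}{\Phi}$. This supplies both the trivial $n$-angles of axiom (1) and, after rotation via $\Sigma^\Phi$, the rotated $n$-angles required by axiom (2), which already belong to $\pentagon$ by axiom (2) for $({\cal T},\pentagon)$. Closure of $\pentagon^\Phi$ under direct sums is inherited from $\pentagon$, and closure under isomorphisms is built into the definition of $\pentagon^\Phi$.

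The core of the argument is axiom (3). After replacing both rows by isomorphic copies, we may assume that they already belong to $\pentagon$, so that all horizontal arrows have degree zero. Decompose the connecting morphisms $h_1,h_2$ into their homogeneous components $h_{j,i}\in{\cal T}(X_j,F^iY_j)$, of which only finitely many are nonzero. Unwinding the orbit-category composition rule and using $\Sigma^\Phi(h_{1,i})=\Sigma(h_{1,i})\psi(i)_{Y_1}$, the commutativity of the given first square in degree $i$ translates precisely into the assertion that $(h_{1,i},h_{2,i})$ is a morphism in ${\cal T}$ from the first row to the $F^i$-shifted second row (whose last map is $F^i(g_n)\psi(i)^{-1}_{Y_1}$). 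Since $F$ is $n$-angulated, so is each $F^i$ with $i\in\Phi$ (for $i<0$ when $F$ is an equivalence, one uses that a quasi-inverse of an $n$-angulated equivalence is again $n$-angulated), whence the shifted row lies in $\pentagon$. Axiom (3) in $({\cal T},\pentagon)$ then supplies extensions $h_{k,i}\colon X_k\to F^iY_k$ for $k=3,\ldots,n$, and setting $h_k:=\sum_i h_{k,i}$ gives the required morphism in $\Oc{\cal T}{F}{\Phi}$; that the last square involves $\Sigma^\Phi h_1$ rather than $\Sigma h_1$ is absorbed exactly by the twist $\psi(i)_{Y_1}$ appearing in the identification above.

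The principal technical obstacle is the bookkeeping in step (i)---the cocycle identity for $\psi(u)$ and its interplay with $\chi(u,v)$---and the matching in step (iii) of the orbit-category commutativities with ${\cal T}$-level ones against the $F^i$-shifted $n$-angles. Getting signs and quasi-inverses right for negative $i\in\Phi$ when $F$ is an equivalence demands careful use of the unit and counit of the adjunction $(F^{-1},F)$, and this is where the bulk of the routine but delicate calculation lives.
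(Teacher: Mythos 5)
Your proposal is correct and takes essentially the same approach as the paper: axioms (1)--(2) follow from the definitions, and axiom (3) is verified by reducing to rows in $\pentagon$ with degree-zero horizontal maps, decomposing $h^1,h^2$ into homogeneous components, applying axiom (3) of $({\cal T},\pentagon)$ against the $F^u$-twisted second row (an $n$-angle with last map $F^u(g_n)\psi(u)^{-1}$ since $F$ is $n$-angulated), and reassembling $h^i:=(h^i_u)_{u\in\Phi}$. The additional verification that $\Sigma^\Phi$ is an automorphism, which you sketch via the cocycle identity for $\psi$, is treated in the paper as part of the setup preceding the proposition rather than of its proof.
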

\begin{proof}
The axioms (F1$'$) and (F2) of Definition \ref{Definition-weakly-n-angle} are satisfied by the definition of $\pentagon^{\Phi}$ and $\Sigma^{\Phi}$.  Now given a commutative diagram
$$\xymatrix{
X_1\ar[r]^{f_1}\ar[d]^{h^1} &X_2\ar[r]^{f_2}\ar[d]^{h^2}&X_3\ar[r]^{f_3} &\cdots\ar[r]^{f_{n-1}} &X_n\ar[r]^{f_n} &\Sigma^{\Phi} X_1\ar[d]^{\Sigma^{\Phi} (h^1)}\\
Y_1\ar[r]^{g_1} &Y_2\ar[r]^{g_2} &Y_3\ar[r]^{g_3} &\cdots\ar[r]^{g_{n-1}} &Y_n\ar[r]^{g_n} &\Sigma^{\Phi}Y_1\\
}$$
in $\Oc{\cal T}{F}{\Phi}$ with rows in $\pentagon^{\Phi}$.  Clearly, we can assume that the rows are in $\pentagon$, and all the morphisms $f_i$ are homogeneous morphisms of degree zero for all $i=1, \cdots, n$.  Let $h^1=(h^1_u)_{u\in\Phi}$ and $h^2=(h^2_u)_{u\in\Phi}$. Then $h^1_ug_1=f_1h^2_u$ for all $u\in\Phi$. Thus, we get a commutative diagram
$$\xymatrix@M=1mm@C=12mm{
X_1\ar[r]^{f_1}\ar[d]^{h^1_u} &X_2\ar[r]^{f_2}\ar[d]^{h^2_u}&X_3\ar[r]^{f_3} &\cdots\ar[r]^{f_{n-1}} &X_n\ar[r]^{f_n} &\Sigma X_1\ar[d]^{\Sigma(h^1_u)}\\
F^u(Y_1)\ar[r]^{F^u(g_1)} &F^u(Y_2)\ar[r]^{F^u(g_2)} &F^u(Y_3)\ar[r]^{F^u(g_3)} &\cdots\ar[r]^{F^u(g_{n-1})} &F^u(Y_n)\ar[r]^{F^u(g_n)\psi(u)^{-1}} & \Sigma F^u(Y_1)\\
}$$
in ${\cal T}$ with rows in $\pentagon$.  Thus, in the weakly $n$-angulated category ${\cal T}$, we get a commutative diagram
$$\xymatrix@C=12mm{
X_1\ar[r]^{f_1}\ar[d]^{h^1_u} &X_2\ar[r]^{f_2}\ar[d]^{h^2_u}&X_3\ar[r]^{f_3}\ar@{-->}[d]^{h^3_u} &\cdots\ar[r]^{f_{n-1}} &X_n\ar[r]^{f_n}\ar@{-->}[d]^{h^n_u} &\Sigma X_1\ar[d]^{\Sigma(h^1_u)}\\
F^u(Y_1)\ar[r]^{F^u(g_1)} &F^u(Y_2)\ar[r]^{F^u(g_2)} &F^u(Y_3)\ar[r]^{F^u(g_3)} &\cdots\ar[r]^{F^u(g_{n-1})} &F^u(Y_n)\ar[r]^{F^u(g_n)\psi(u)^{-1}} & \Sigma F^u(Y_1)\\
}$$
for all $u\in\Phi$. Defining $h^i:=(h^i_u)_{u\in\Phi}$, we obtain a commutative diagram
$$\xymatrix{
X_1\ar[r]^{f_1}\ar[d]^{h^1} &X_2\ar[r]^{f_2}\ar[d]^{h^2}&X_3\ar[r]^{f_3}\ar[d]^{h^3} &\cdots\ar[r]^{f_{n-1}} &X_n\ar[r]^{f_n}\ar[d]^{h^n} &\Sigma^{\Phi} X_1\ar[d]^{\Sigma^{\Phi} (h^1)}\\
Y_1\ar[r]^{g_1} &Y_2\ar[r]^{g_2} &Y_3\ar[r]^{g_3} &\cdots\ar[r]^{g_{n-1}} &Y_n\ar[r]^{g_n} &\Sigma^{\Phi}Y_1\\
}$$
in $\Oc{\cal T}{F}{\Phi}$.  Thus $\Oc{\cal T}{F}{\Phi}$ satisfies the axiom (F3) of Definition \ref{Definition-weakly-n-angle}. Hence $\Oc{\cal T}{F}{\Phi}$, together with $\Sigma^{\Phi}$ and $\pentagon^{\Phi}$ is a weakly $n$-angulated category.
\end{proof}

\noindent
{\it Remark}.  For an   $n$-angulated category $\mathcal{T}$, the $\Phi$-orbit category $\Oc{\cal T}{F}{\Phi}$ is not an $n$-angulated category in general.

\medskip

In \cite{Hu2013a} and \cite{Chen2013},  the authors start from an $n$-angle
$$X\lraf{f}M_1\lra M_2\lra\cdots\lra M_{n-2}\lraf{g} Y\lraf{w} \Sigma X$$
in an $n$-angulated category ${\cal T}$ with $M_i\in\add(M)$ for some $M\in {\cal T}$. Let $F$ be an  $n$-angulated auto-equivalence of ${\cal T}$. The main results \cite[Theorem 1.1]{Hu2013a} and \cite[Theorem 3.1]{Chen2013} state that, for each admissible subset $\Phi$ of $\mathbb{Z}$, there is a derived equivalence between the quotient rings $\E_{\cal T}^{F, \Phi}(M\oplus X)/I$ and $\E_{\cal T}^{F,\Phi}(M\oplus Y)/J$ of the $\Phi$-Yoneda algebras provided that $f$(respectively, $g$) is a left (respectively, right) $\add_{\Oc{{\cal T}}{F}{\Phi}}(M)$-approximation in the $\Phi$-orbit category $\Oc{\cal T}{F}{\Phi}$ and ${\cal T}(F^iM, X)=0$ (respectively, ${\cal T}(Y, F^iM)=0$) for all $0\neq i\in\Phi$.  The ideals $I$ and $J$ are defined as follows.
Set $\bar{w}:=\left[\begin{smallmatrix}g\\0\end{smallmatrix}\right]: Y\oplus M\ra \Sigma X$ and $\tilde{w}:=[w, 0]: Y\lra \Sigma X\oplus M$. Then
$$I:=\left\{(x_i)\in\E_{\cal T}^{F, \Phi}(X\oplus M)|x_i=0, \forall0\neq i\in\Phi, x_0\mbox{ factorizes through }\add(M)\mbox{ and } \Sigma^{-1}(\tilde{w})\right\}, $$
$$J:=\left\{(x_i)\in\E_{\cal T}^{F, \Phi}(Y\oplus M)|x_i=0, \forall 0\neq i\in\Phi, x_0\mbox{ factorizes through }\add(M) \mbox{ and } \bar{w}\right\}.$$
This looks quite artificially defined.  However, we shall see that the ideals $I$ and $J$ are actually factorizable coghosts and ghosts in the $\Phi$-orbit category, and then one can use Theorem \ref{theorem-Fghosts} and Proposition \ref{prop-phi-orbit-n-angle} to give alternative proofs of the results in \cite{Hu2013a,Chen2013}.  Here we consider a more general situation: ${\cal T}$ is a weakly $n$-angulated category, and $F$ is just an $n$-angle endo-functor of ${\cal T}$, not necessarily an auto-equivalence.

\begin{Lem}
Keep the notations above and set ${\cal D}:=\add_{\Oc{{\cal T}}{F}{\Phi}}(M)$. We have the following.

$(1)$. If $g$ is a right ${\cal D}$-approximation in $\Oc{\cal T}{F}{\Phi}$, and ${\cal T}(Y, F^iM)=0$ for all $0\neq i \in \Phi$,  then $J=\fgh_{\cal D}(Y\oplus M)$;

$(2)$.  If $f$ is a left ${\cal D}$-approximation in $\Oc{\cal T}{F}{\Phi}$, and ${\cal T}(M, F^iX)=0$ for all $0\neq i\in\Phi$, then $I=\fcogh_{\cal D}(X\oplus M)$.
\label{Lemma-idealsIJ}
\end{Lem}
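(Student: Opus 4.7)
The plan is to verify part (1) via the two inclusions $J\subseteq\calJ_{\cal D}(Y\oplus M)$ and $\calJ_{\cal D}(Y\oplus M)\subseteq J$; part (2) follows from an entirely dual argument (with $f$ a left ${\cal D}$-approximation, hypothesis ${\cal T}(M,F^iX)=0$, and $\tilde{w}$ in place of $\bar{w}$). The main tools are Lemma \ref{Lem-ann} (1) applied to the right ${\cal D}$-approximation $g\oplus 1_M:M_{n-2}\oplus M\to Y\oplus M$ of $Y\oplus M$ in $\Oc{\cal T}{F}{\Phi}$, together with the properties of the $n$-angle $X\to M_1\to\cdots\to M_{n-2}\lraf{g}Y\lraf{w}\Sigma X$ supplied by Lemma \ref{Lemma-n-angle}: the identity $w\circ g=0$ from part (1), and the cohomological exactness of ${\cal T}(-,-)$ from part (2).

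For $J\subseteq\calJ_{\cal D}(Y\oplus M)$, take $(x_i)\in J$. Then $(x_i)$ is concentrated in degree zero and $x_0=b\circ a=h_0\circ\bar{w}$ for some $a:Y\oplus M\to M'$, $b:M'\to Y\oplus M$ with $M'\in\add(M)$, and $h_0:\Sigma X\to Y\oplus M$ in ${\cal T}$. The factorization $x_0=b\circ a$ immediately gives $(x_i)\in\fac_{\cal D}$ in the orbit category. For $\rann_{\cal D}$, use $w\circ g=0$ to get $\bar{w}\circ(g\oplus 1_M)=0$, whence $x_0\circ(g\oplus 1_M)=h_0\circ\bar{w}\circ(g\oplus 1_M)=0$, and Lemma \ref{Lem-ann} (1) yields $(x_i)\in\rann_{\cal D}$.

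For the reverse inclusion, take $(x_i)\in\rann_{\cal D}\cap\fac_{\cal D}$. Writing $x_i$ as a $2\times 2$ matrix with entries $x_i^{YY}:Y\to F^iY$, $x_i^{YM}:Y\to F^iM$, $x_i^{MY}:M\to F^iY$, $x_i^{MM}:M\to F^iM$, the $\rann_{\cal D}$-condition $x_i\circ(g\oplus 1_M)=0$ forces $x_i^{MY}=x_i^{MM}=0$ and $x_i^{YY}\circ g=x_i^{YM}\circ g=0$; the hypothesis ${\cal T}(Y,F^iM)=0$ further gives $x_i^{YM}=0$ for $i\neq 0$. The crux is to show $x_i^{YY}=0$ for $i\neq 0$: writing $(x_i)=\beta\circ\alpha$ in the orbit category through some $M''\in\add(M)$ from $\fac_{\cal D}$, and noting that $\alpha_j^{YM''}\in{\cal T}(Y,F^jM'')=0$ for $j\neq 0$ by hypothesis, one obtains $x_i^{YY}=\beta_i^{M''Y}\circ\alpha_0^{YM''}$; since $g$ is a right ${\cal D}$-approximation in the orbit category and $M''\in{\cal D}$, one can factor $\beta_i^{M''Y}=F^i(g)\circ\tilde{\delta}_i$ with $\tilde{\delta}_i\in{\cal T}(M'',F^iM_{n-2})$; finally $\tilde{\delta}_i\circ\alpha_0^{YM''}\in{\cal T}(Y,F^iM_{n-2})=0$ forces $x_i^{YY}=0$. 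Hence $x_i=0$ for $i\neq 0$. For $x_0$, combining the $\fac_{\cal D}$ decomposition with $x_0^{MY}=x_0^{MM}=0$ rewrites $x_0=\beta_0\circ[\alpha_0^{YM''},0]$, a genuine factorization through $M''\in\add(M)$ in ${\cal T}$; and applying ${\cal T}(-,Y\oplus M)$ contravariantly to $M_{n-2}\lraf{g}Y\lraf{w}\Sigma X$ (Lemma \ref{Lemma-n-angle} (2)), the conditions $x_0^{YY}\circ g=x_0^{YM}\circ g=0$ together with $x_0^{MY}=x_0^{MM}=0$ yield $x_0=h_0\circ\bar{w}$ for some $h_0:\Sigma X\to Y\oplus M$ in ${\cal T}$. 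Hence $(x_i)\in J$.

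I expect the main obstacle to be precisely this vanishing of $x_i^{YY}$ for $i\neq 0$: it requires simultaneously invoking the $\fac_{\cal D}$ decomposition (to realize $x_i^{YY}$ as a composition through $M''$), the right ${\cal D}$-approximation property of $g$ in the orbit category (to factor the $M''\to Y$ piece through $g$), and the vanishing hypothesis ${\cal T}(Y,F^iM)=0$; each ingredient is essential, and the slightly surprising point is that one must use all three to kill the higher-degree parts of $(x_i)$.
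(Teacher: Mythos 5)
Your argument is correct and follows essentially the same route as the paper: both rest on Lemma \ref{Lem-ann} (1) applied to the approximation $g\oplus 1_M$, on using the right ${\cal D}$-approximation property of $g$ in the orbit category to replace the $\fac_{\cal D}$-factorization by one whose first leg lies in ${\cal T}(Y, F^i(\add(M)))$ (which the hypothesis kills for $i\neq 0$), and on the cohomological exactness of Lemma \ref{Lemma-n-angle} (2) to identify the degree-zero conditions with factorization through $\bar{w}$. The only difference is presentational: you verify both inclusions componentwise and make explicit the $\bar{w}$-step and the vanishing of $x_i^{YY}$, which the paper compresses by factoring the whole morphism through $\tilde{g}$ at once and leaving the easy direction implicit.
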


\begin{proof}
(1).  By Lemma \ref{lemma-ann}, we have $\fgh_{\cal D}(M, Y\oplus M)=0$.  Now we consider $\fgh_{\cal D}(Y, Y\oplus M)$. Since ${g}$ is a right ${\cal D}$-approximation. By Lemma \ref{lemma-ann} (1), one has $\fgh_{\cal D}(Y, Y\oplus M)$ consists of those morphisms $(x_i)\in\E_{\cal T}^{F, \Phi}(Y, Y\oplus M)$ satisfying the conditions:

\smallskip
 (a). $gx_i=0$ for all $i\in\Phi$;

 (b). There is some $(y_i)\in \Oc{\cal T}{F}{\Phi}(Y, M_Y\oplus M)$ such that $y_i*\tilde{g}=x_i$ for all $i\in\Phi$.

\smallskip
{\parindent=0pt By} our assumption that ${\cal T}(Y, F^iM)=0$ for all $0\neq i\in\Phi$, the morphism $y_i$ in condition (b) above is zero for all $0\neq i\in\Phi$. Thus $x_i=0$ for all $0\neq i\in\Phi$, $gx_0=0$ and $x_0$ factorizes through $\add(M)$ in ${\cal T}$.
The proof of (2) is dual.
\end{proof}

Combining Theorem \ref{theorem-Fghosts}, Proposition \ref{prop-phi-orbit-n-angle} and Lemma \ref{Lemma-idealsIJ}, we get the following corollary.

\begin{Koro}
 Let $({\cal T}, \Sigma)$ be a weakly $n$-angulated category ($n\geq 3$) with an $n$-angle endo-functor $F$, and let $M$ be an object in ${\cal T}$. Suppose that $\Phi$ is an admissible subset of $\mathbb{Z}$.  Let $$X\lraf{f}M_1\lra\cdots\lra M_{n-2}\lraf{g}Y\lra\Sigma X$$
 be an $n$-angle in ${\cal T}$ such that $M_i\in\add(M)$ for all $i=1,\cdots,n-2$, and that $f$  and $g$ are   left and right $\add(M)$-approximations in the orbit category $\Oc{\cal T}{F}{\Phi}$,  respectively. Suppose that ${\cal T}(Y, F^iM)=0={\cal T}(M, F^iX)$ for all $0\neq i\in\Phi$.  Then the rings $\E_{\cal T}^{F, \Phi}(M\oplus X)/I$ and $\E_{\cal T}^{F,\Phi}(M\oplus Y)/J$ are derived equivalent.
\end{Koro}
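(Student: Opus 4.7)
The plan is to transfer the problem into the $\Phi$-orbit category $\Oc{\cal T}{F}{\Phi}$, which by the preceding proposition is itself a weakly $n$-angulated $k$-category with shift $\Sigma^{\Phi}$ and class of $n$-angles $\pentagon^{\Phi}$. Since the given sequence $X\lraf{f} M_1\lra\cdots\lra M_{n-2}\lraf{g} Y\lra\Sigma X$ already lies in $\pentagon$, it belongs to $\pentagon^{\Phi}$ by the very definition of the latter, so it is an $n$-angle inside $\Oc{\cal T}{F}{\Phi}$.

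Next I would set ${\cal D}:=\add_{\Oc{\cal T}{F}{\Phi}}(M)$, which contains each $M_i$ since $M_i\in\add_{\cal T}(M)\subseteq {\cal D}$. The hypothesis is exactly that $f$ and $g$ are, respectively, a left and a right ${\cal D}$-approximation in the orbit category. Consequently the $n$-angle above satisfies all the assumptions of Theorem \ref{Theorem-proper-annihilator}, applied to the weakly $n$-angulated category $\Oc{\cal T}{F}{\Phi}$. That theorem then delivers a derived equivalence
$$\End_{\Oc{\cal T}{F}{\Phi}/\calI_{\cal D}}(M\oplus X)\;\simeq_{\rm der}\;\End_{\Oc{\cal T}{F}{\Phi}/\calJ_{\cal D}}(M\oplus Y).$$

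To conclude, I would note that $\End_{\Oc{\cal T}{F}{\Phi}}(M\oplus X)=\E_{\cal T}^{F,\Phi}(M\oplus X)$ and similarly for $Y$, by the definition of the $\Phi$-Yoneda algebra. The two remaining vanishing hypotheses ${\cal T}(M,F^iX)=0$ and ${\cal T}(Y,F^iM)=0$ for every $0\neq i\in\Phi$ are precisely what is required to invoke Lemma \ref{Lemma-idealsIJ}, which identifies $\calI_{\cal D}(X\oplus M)=I$ and $\calJ_{\cal D}(Y\oplus M)=J$. Substituting these identifications into the derived equivalence above yields the corollary.

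The only genuine subtlety is verifying that the categorical data we feed into Theorem \ref{Theorem-proper-annihilator} are the correct ones inside $\Oc{\cal T}{F}{\Phi}$: the approximation conditions must be read in the orbit category (not in ${\cal T}$), and the ideals $\calI_{\cal D}$, $\calJ_{\cal D}$ likewise. Both points are handled transparently, because the hypotheses of the corollary are already phrased in terms of the orbit category, and the subsequent application of Lemma \ref{Lemma-idealsIJ} uses exactly the extra ${\cal T}$-vanishing conditions to pin down the proper ${\cal D}$-annihilators as the ideals $I$ and $J$ defined via the $\Phi$-grading. No additional computation inside the $\Phi$-Yoneda algebras is needed, which is the main point that distinguishes this approach from those of \cite{Hu2013a} and \cite{Chen2013}.
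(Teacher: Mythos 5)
Your argument is exactly the paper's: the $\Phi$-orbit category is weakly $n$-angulated by the preceding proposition, the given $n$-angle with the approximation hypotheses feeds into Theorem \ref{Theorem-proper-annihilator} applied in $\Oc{\cal T}{F}{\Phi}$ with ${\cal D}=\add_{\Oc{\cal T}{F}{\Phi}}(M)$, and Lemma \ref{Lemma-idealsIJ} (using the vanishing conditions ${\cal T}(Y,F^iM)=0={\cal T}(M,F^iX)$ for $0\neq i\in\Phi$) identifies $\calI_{\cal D}(X\oplus M)$ and $\calJ_{\cal D}(Y\oplus M)$ with the ideals $I$ and $J$. This is precisely how the paper deduces the corollary, so the proposal is correct and takes the same route.
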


This corollary generalizes the results \cite[Theorem 3.1]{Hu2013a} and \cite[Theorem 1.1]{Chen2013}:   the functor $F$ here is not necessarily an auto-equivalence,  while this is required in both \cite{Hu2013a} and \cite{Chen2013}.

\section{Examples}

In this section, we  give some examples to illustrate our main results.

Throughout this section, we assume that $A$ is a self-injective artin algebra, and write $${\cal D}_{n}:=\add\big(\bigoplus_{i=0}^n\Sigma^{-i}A\big),$$
in $\K{\modcat{A}}$ for $n\in \mathbb{N}$.
  For simplicity, we will write $\mathscr{K}$ for $\K{\modcat{A}}$.  It is straightforward to prove the following lemma.
\begin{Lem}
$\cogh_{{\cal D}_n}$ and $\gh_{{\cal D}_n}$ are equal, and both of them consist of morphisms  $\cpx{\alpha}$ such that $H^i(\cpx{\alpha})=0$ for all $0\leq i\leq n$.
\label{lemma-ann-ghost}
\end{Lem}

 {\em Remark. } This lemma implies $\fcogh_{{\cal D}_n}$ and $\fgh_{{\cal D}_n}$ also coincide. Moreover, if $\cpx{M}$ is a complex over $A$ with zero homology in all degrees not in $\{0, 1,\cdots, n\}$, then $\cogh_{{\cal D}_n}(\cpx{M})$ consists of {\em ghost maps}, that is, chain maps $\cpx{\alpha}: \cpx{M}\lra \cpx{M}$ such that $H^i(\cpx{\alpha})=0$ for all $i\in\mathbb{Z}$. We denote by $\mathcal{G}$ the ideal of $\mathscr{K}$ consisting of ghost maps. Then $\cogh_{{\cal D}_n}(\cpx{M})=\gh_{{\cal D}_n}(\cpx{M})=\mathcal{G}(\cpx{M})$. Let $\mathcal{G}_{\cal D}:=\mathcal{G}\cap \mathcal{F}_{\cal D}$. Then $\mathcal{G}_{\cal D}(\cpx{M})=\fcogh_{\mathcal{D}}(\cpx{M})=\fgh_{\cal D}(\cpx{M})$

\medskip
Let $\cpx{X}$ be a complex over $\modcat{A}$, and let $i$ be an integer.  Suppose that $\pi^i: P_X^i\lra H^i(\cpx{X})$ be a projective cover of the $i$-th homology of $\cpx{X}$.  Then $\pi^i$ can be lifted to a morphism $h^i: P_X^i\ra\Ker d_X^{i}$ along the canonical epimorphism $\Ker d_X^i\lra H^i(\cpx{X})$.  Let $f^i: P_X^i\lra X^i$ be the composite of $h^i$ and the inclusion $\Ker d_X^i\hookrightarrow X^i$. Then $f^id_X^i=0$, and $f^i$ gives rise to a chain map from $\Sigma^{-i}P_X^i\lra \cpx{X}$.  This can be illustrated in the following graph.
$$\xy<8mm, 0mm>:
(.5, 0)*+{X^{i-1}}="xi-1",
(4, 0)*+{X^i}="xi",
(7.5, 0)*+{X^{i+1}}="xi+1",
(2, -1)*+{\Ker d_X^i}="ki",
(5.5, -1)*+{H^i(\cpx{X})}="hi",
(4, 2.5)*+{P_X^i}="pi1",
(5.5, 1.5)*+{P_X^i}="pi2",
{\ar@{->>} "ki"; "hi"},
{\ar@{->>}_(.4){\pi^i} "pi2"; "hi"},
{\ar^{d_X^{i-1}} "xi-1"; "xi"},
{\ar^(.6){d_X^i} "xi"; "xi+1"},
{\ar@{^{(}->} "ki"; "xi"},
{\ar@{=} "pi1"; "pi2"},
{\ar_{f^i} "pi1"; "xi"},
{\ar@/_.5pc/@{-->}^(.3){h^i} "pi2"; "ki"},
\endxy$$
Define
$$\cpx{P_X}:=\coprod_{i\in\mathbb{Z}}\Sigma^{-i}P_X^i.$$
Then we get a chain map $\cpx{f}: \cpx{P_X}\lra \cpx{X}$.  Form a triangle
$$\cpx{Y}\lraf{\cpx{g}}\cpx{P_X}\lraf{\cpx{f}}\cpx{X}\lra\Sigma\cpx{Y} \quad\quad  (\star)$$
in $\mathscr{K}$ and set ${\cal D}_{\infty}:=\add\{\Sigma^{-i}A\mid i\in\mathbb{Z}\}$.  We claim that $(\star)$ is actually a ${\cal D}_{\infty}$-split triangle in $\mathscr{K}$.  Indeed, for each $i\in\mathbb{Z}$,  there is a commutative diagram
$$\xymatrix@M=1mm@C=20mm{
\mathscr{K}(\Sigma^{-i}A, \cpx{P_X})\ar[r]^{\mathscr{K}(\Sigma^{-i}A, \cpx{f})}\ar[d]^{\simeq} & \mathscr{K}(\Sigma^{-i}A, \cpx{X})\ar[d]^{\simeq}\\
P_X^i\ar@{->>}[r]^{\pi^i} & H^i(\cpx{X}).
}$$
Hence $\mathscr{K}(\Sigma^{-i}A, \cpx{f})$ is surjective for all $i\in\mathbb{Z}$.  It follows that $\mathscr{K}(\Sigma^{-i}A, \cpx{f})$ is surjective for all $i\in\mathbb{Z}$. Note that there is a natural isomorphism $\mathscr{K}(\cpx{P}, -)\simeq D\mathscr{K}(-, \nu_A\cpx{P})$ for a bounded complex $\cpx{P}$ of finitely generated projective $A$-modules, where $D$ is the usual duality of $A$-modules and $\nu_A$ is the Nakayama functor. It follows that $\mathscr{K}(\cpx{f}, \Sigma^{-i}(\nu_AA))$ is injective for all $i\in\mathbb{Z}$. Since $A$ is self-injective, we have $\add(\nu_AA)=\add({}_AA)$.  Hence $\mathscr{K}(\cpx{f}, \Sigma^{-i}A)$ is injective for all $i\in\mathbb{Z}$.  Using the long exact sequences obtained by applying $\mathscr{K}(A, -)$ and $\mathscr{K}(-, A)$ to the triangle $(\star)$, we deduce that the sequences
$$0\lra \mathscr{K}(\Sigma^{-i}A, \cpx{Y})\lra\mathscr{K}(\Sigma^{-i}A, \cpx{P_X})\lra \mathscr{K}(\Sigma^{-i}A, \cpx{X})\lra 0, \quad (*)$$
$$0\lra \mathscr{K}( \cpx{X}, \Sigma^{-i}A)\lra\mathscr{K}(\cpx{P_X}, \Sigma^{-i}A)\lra \mathscr{K}(\cpx{X}, \Sigma^{-i}A)\lra 0 \quad (**)$$
are exact for all $i\in\mathbb{Z}$. Particularly, $\cpx{f}$ is a right $\mathcal{D}_{\infty}$-approximation and $\cpx{g}$ is a left $\mathcal{D}_{\infty}$-approximation.   Note that the exact sequence $(*)$ is isomorphic to the sequence
$$0\lra H^i(\cpx{Y})\lra P_X^i\lra H^i(\cpx{X})\lra 0.$$
This shows that $H^i(\cpx{Y})=\Omega(H^i(\cpx{X}))$.

\medskip
Now assume that $\cpx{X}$ is a bounded complex
$$0\lra X^0\lraf{d^0} X^1\lraf{d^1}\cdots\lraf{d^{n-1}} X^n\lra 0.$$
Then $P_X^i=0$ for all $i\not\in\{0, \cdots, n\}$. Thus $\cpx{P_X}$ lies in $\mathcal{D}_n$, and the triangle $(\star)$ is a $\mathcal{D}_n$-split triangle.  By Theorem \ref{theorem-Fghosts} and the remark after Lemma \ref{lemma-ann-ghost}, the algebras $$\End_{\mathscr{K}/\mathcal{G}_{\cal D}}\big(\cpx{Y}\oplus\bigoplus_{i=0}^n\Sigma^{-i}A\big)\mbox{ and }\End_{\mathscr{K}/\mathcal{G}_{\cal D}}\big(\cpx{X}\oplus\bigoplus_{i=0}^n\Sigma^{-i}A\big)$$
are derived equivalent. Similarly,  by Theorem \ref{theorem-ghosts} and the remark after Lemma \ref{lemma-ann-ghost}, the algebras
$$\End_{\mathscr{K}/\mathcal{G}}\big(\cpx{Y}\oplus\bigoplus_{i=0}^n\Sigma^{-i}A\big)\mbox{ and }\End_{\mathscr{K}/\mathcal{G}}\big(\cpx{X}\oplus\bigoplus_{i=0}^n\Sigma^{-i}A\big)$$
are also derived equivalent.  In the following, we shall see that these algebras have a very nice form.

\begin{Lem} \label{ex1}
$H^i: \mathscr{K}(\cpx{X}, \Sigma^{-i}A)\lra \Hom_A(H^i(\cpx{X}), A)$ is an isomorphism in $\modcat{A\opp}$ for all $0\leq i\leq n$.
\end{Lem}
\begin{proof}
We first show that $H^i$ is monic. Suppose that $H^i(\cpx{f})=0$. Since $H^j(\Sigma^{-i}A)=0$ for all $j\neq i$, we have $H^j(\cpx{f})=0$ for all $j\in\mathbb{Z}$. It follows from Lemma \ref{lemma-ann-ghost}  that $\cpx{f}\in \cogh_{\cal D}(\cpx{X}, \Sigma^{-i}A)=0$.  Let $f: H^i(\cpx{X})\lra A$ be an $A$-module homomorphism. Since $A$ is self-injective, $f$ factorizes through the inclusion $H^i(\cpx{X})\hookrightarrow X^i/\Img d^{i-1}$. This gives rise to a chain map  $\cpx{f}: \cpx{X}\lra \Sigma^{-i}A$ such that $f^i: X^i\lra A$ is the composite $X^i\lra X^i/\Img d^{i-1}\lra A$. Notices that $H^i(\cpx{f})=f$. We conclude that $H^i$ is also epic.
\end{proof}

 With the preparations above, we can write

$$\End_{\mathscr{K}/\mathcal{G}}\big(\cpx{X}\oplus\bigoplus_{i=0}^n\Sigma^{-i}A\big)=\begin{pmatrix}
\End_{\mathscr{K}}(\cpx{X})/\mathcal{G}(\cpx{X})& (H_X^0)^*&\cdots & (H_X^n)^*\\
H_X^0 & A&\cdots&\ 0\\
\vdots &\vdots&\ddots&0\\
H_X^{n}& 0&\cdots & A
\end{pmatrix}, $$
$$\End_{\mathscr{K}/\mathcal{G}}\big(\cpx{Y}\oplus\bigoplus_{i=0}^n\Sigma^{-i}A\big)=\begin{pmatrix}
\End_{\mathscr{K}}(\cpx{Y})/\mathcal{G}(\cpx{Y})& (\Omega(H_X^0))^*&\cdots & (\Omega(H_X^n))^*\\
\Omega(H_X^0) & A&\cdots&\ 0\\
\vdots &\vdots&\ddots&0\\
\Omega(H_X^{n})& 0&\cdots & A
\end{pmatrix}, $$
The algebras $\End_{\mathscr{K}/\mathcal{G}_{\cal D}}\big(\cpx{X}\oplus\bigoplus_{i=0}^n\Sigma^{-i}A\big)$ and $\End_{\mathscr{K}/\mathcal{G}_{\cal D}}\big(\cpx{Y}\oplus\bigoplus_{i=0}^n\Sigma^{-i}A\big)$ have similar forms: just replacing $\mathcal{G}$ with $\mathcal{G}_{\cal D}$ in the above matrices.

\medskip

In the following, we will illustrate  our results by a concrete example.

{\bf Example. } Let $k$ be a field, and let $A=k[x]/(x^n)$. Suppose that $1\leq m\leq n-1$ and that  $\cpx{X}$ is the complex
$$0\lra A\lraf{\cdot x^m} A\lra 0$$
with the left $A$ in degree zero.  The endomorphism algebra $\End_{\K{{A}}/\mathcal{G}}(\cpx{X}\oplus A\oplus \Sigma^{-1}A)$ is denoted by $\Lambda(n, m)$. We first give some morphisms in $\Lambda(n, m)$.

$$\alpha_1: \xymatrix{0\ar[d]\ar[r] & A\ar[d]^{\cdot x}\\
0\ar[r] & A}\quad  \alpha_2:\xymatrix{A\ar[d]_{\cdot x}\ar[r] & 0\ar[d]\\
A\ar[r] & 0}, \quad\alpha_3:{}\xymatrix{A\ar[r]^{.x^m}\ar[d]^{.x}&A\ar[d]^{.x}\\
A\ar[r]^{.x^{m}}&A
}, $$

$$\beta_1:{}\xymatrix{0\ar[r]\ar[d]&A\ar[d]^{id}\\A\ar[r]^{.x^m}&A
}, \quad\beta_2:{}\xymatrix{A\ar[r]^{.x^m}\ar[d]&A\ar[d]^{.x^{n-m}}\\
0\ar[r]&A}, \quad \beta_3:{}\xymatrix{A\ar[r]^{.x^m}\ar[d]^{id}&A\ar[d]\\A\ar[r]&0}, \quad \beta_4:{}\xymatrix{A\ar[r]\ar[d]^{.x^{n-m}}&0\ar[d]\\A\ar[r]^{.x^m}&A}.$$

{\bf\parindent=0pt Case I: } $1<m<n-1$. In this case, the above morphisms are irreducible and the algebra $\Lambda(n, m)$ is given by the following quiver with relations.
$$\begin{array}{c}
\xymatrix{{\bullet}\ar@(lu,ld)_(.4){\alpha_1}\ar@<2pt>[r]^{\beta_1}&\bullet\ar@<2pt>[l]^{\beta_2}\ar@<2pt>[r]^{\beta_3}\ar@(ru, lu)_{\alpha_3}&\bullet\ar@<2pt>[l]^{\beta_4}\ar@(ru,rd)^{\alpha_2}\\
}\\
\alpha_3^m, \, \beta_1\beta_3,\,  \beta_4\beta_2,\,  \alpha_1^m\beta_1,\,  \beta_2\alpha_1^m, \alpha_2^m\beta_4,\,  {\beta_3}\alpha_2^m\\
 \alpha_1^{n-m}-\beta_1\beta_2, \,
  \alpha_2^{n-m}-\beta_4\beta_2, \,
    \alpha_3^{n-m}-\beta_3\beta_4-\beta_2\beta_1\\
   \alpha_1\beta_1-\beta_1\alpha_3,
   \alpha_3\beta_2-\beta_2\alpha_1, \alpha_3\beta_3-\beta_3\alpha_2\\
   \alpha_3^{i-n+m}\beta_2\beta_1 \quad (i:=\max\{m, n-m\})
\end{array}$$

 {\bf\parindent=0pt Case II: } $m=n-1$. In this case, $\alpha_1=\beta_1\beta_2$, $\alpha_2=\beta_4\beta_3$, $\alpha_3=\beta_3\beta_4+\beta_2\beta_1$, and the morphisms $\beta_i, i=1,2,3,4$ are irreducible. The algebra $\Lambda(n, n-1)$ is given by the following quiver with relations.
$$\begin{array}{c}
\xymatrix{{\bullet}\ar@<2pt>[r]^{\beta_1}&\bullet\ar@<2pt>[l]^{\beta_2}\ar@<2pt>[r]^{\beta_3}&\bullet\ar@<2pt>[l]^{\beta_4}\\
}\\
\beta_1\beta_3,\,  \beta_4\beta_2,  \, (\beta_2\beta_1)^{n-1}, \, (\beta_3\beta_4)^{n-1}\end{array}$$

{\bf\parindent=0pt Case III: } $m=1$.   In this case, $\alpha_3=0$ and the other morphisms above are irreducible. Then algebra $\Lambda(n, 1)$ is given by the following quiver with relations.
$$\begin{array}{c}
\xymatrix{{\bullet}\ar@(lu,ld)_(.4){\alpha_1}\ar@<2pt>[r]^{\beta_1}&\bullet\ar@<2pt>[l]^{\beta_2}\ar@<2pt>[r]^{\beta_3}&\bullet\ar@<2pt>[l]^{\beta_4}\ar@(ru,rd)^{\alpha_2}\\
}\\
\beta_1\beta_3, \, \beta_4\beta_2, \, {\alpha}_1 {\beta}_1, \, {\beta}_2 {\alpha}_1, \, {\alpha}_2{\beta}_4
, \, \beta_3\alpha_2\\
 \alpha_1^{n-1}-\beta_1\beta_2,\,  \alpha_2^{n-1}-\beta_4\beta_3, \, \beta_2\beta_1
\end{array}$$
One can calculate the Cartan matrix of $\Lambda(n, m)$:
$$\begin{bmatrix} m & m & m\\
m & n & 0\\
m & 0 &n \end{bmatrix}  (2m\leq n), \quad \begin{bmatrix} 3m-n & m & m\\
m & n & 0\\
m & 0 &n \end{bmatrix}  (2m > n).$$
 The construction above gives an ${\cal D}_1$-split triangle
$$\cpx{Y}\lra A\oplus \Sigma^{-1}A\lra \cpx{X}\lra \Sigma\cpx{Y}$$
in $\K{\modcat{A}}$.  An easy calculation shows that $\cpx{Y}$ is isomorphic in $\K{\modcat{A}}$ to the complex
$$0\lra A\lraf{\cdot x^{n-m}}A\lra 0.$$
Then the algebras $\Lambda(n, m)=\End_{\K{A}/\mathcal{G}}(\cpx{X}\oplus A\oplus \Sigma^{-1}A)$ and $\End_{\K{A}/\mathcal{G}}(\cpx{Y}\oplus A\oplus \Sigma^{-1}A)$ are derived equivalent.  Note that $\End_{\K{A}/\mathcal{G}}(\cpx{Y}\oplus A\oplus \Sigma^{-1}A)$ is just $\Lambda(n, n-m)$. That is, the algebra $\Lambda(n, m)$ is derived equivalent to $\Lambda(n, n-m)$ for all $1\leq m\leq n-1$.

\section*{Acknowledgements}
The research works of both authors are partially supported by NSFC. The first author was
partially supported by NSFC no. 11301398 and RFDP no. 20130141120035.
The second author
thanks  BNSF(1132005, KZ201410028033)  and the Fundamental Research Funds for the Central Universities for partial support. Both authors thank Prof. Steffen K\"{o}nig for warm hospitality and kind help during their visit to the University of Stuttgart in 2015.

\bibliographystyle{aomplain}
\bibliography{../refData}

\providecommand{\bysame}{\leavevmode\hbox to3em{\hrulefill}\thinspace}
\providecommand{\noopsort}[1]{}
\providecommand{\mr}[1]{\href{http://www.ams.org/mathscinet-getitem?mr=#1}{MR~#1}}
\providecommand{\zbl}[1]{\href{http://www.zentralblatt-math.org/zmath/en/search/?q=an:#1}{Zbl~#1}}
\providecommand{\jfm}[1]{\href{http://www.emis.de/cgi-bin/JFM-item?#1}{JFM~#1}}
\providecommand{\arxiv}[1]{\href{http://www.arxiv.org/abs/#1}{arXiv~#1}}
\providecommand{\doi}[1]{\url{http://dx.doi.org/#1}}
\providecommand{\MR}{\relax\ifhmode\unskip\space\fi MR }
\providecommand{\MRhref}[2]{%
  \href{http://www.ams.org/mathscinet-getitem?mr=#1}{#2}
}
\providecommand{\href}[2]{#2}
\begin{thebibliography}{10}

\bibitem{Aihara2012}
\bgroup\scshape{}T.~Aihara\egroup{} and \bgroup\scshape{}O.~Iyama\egroup{},
  Silting mutation in triangulated categories,  \emph{J. Lond. Math. Soc. (2)}
  \textbf{85} (2012), 633--668.

\bibitem{Auslander1980}
\bgroup\scshape{}M.~Auslander\egroup{} and \bgroup\scshape{}S.~O.
  Smal{\o}\egroup{}, Preprojective modules over Artin algebras,  \emph{J.
  Algebra} \textbf{66} (1980), 61--122.

\bibitem{Buan2006}
\bgroup\scshape{}A.~B. Buan\egroup{}, \bgroup\scshape{}R.~Marsh\egroup{},
  \bgroup\scshape{}M.~Reineke\egroup{}, \bgroup\scshape{}I.~Reiten\egroup{},
  and \bgroup\scshape{}G.~Todorov\egroup{}, Tilting theory and cluster
  combinatorics,  \emph{Adv. Math.} \textbf{204} (2006), 572--618.

\bibitem{Chen2013}
\bgroup\scshape{}Y.~Chen\egroup{}, Derived equivalences in {$n$}-angulated
  categories,  \emph{Algebr. Represent. Theory} \textbf{16} (2013), 1661--1684.

\bibitem{Geiss2013}
\bgroup\scshape{}C.~Geiss\egroup{}, \bgroup\scshape{}B.~Keller\egroup{}, and
  \bgroup\scshape{}S.~Oppermann\egroup{}, {$n$}-angulated categories,  \emph{J.
  Reine Angew. Math.} \textbf{675} (2013), 101--120.

\bibitem{Geis2006}
\bgroup\scshape{}C.~Gei{\ss}\egroup{}, \bgroup\scshape{}B.~Leclerc\egroup{},
  and \bgroup\scshape{}J.~Schr{{\"o}}er\egroup{}, Rigid modules over
  preprojective algebras,  \emph{Invent. Math.} \textbf{165} (2006), 589--632.

\bibitem{Happel1988}
\bgroup\scshape{}D.~Happel\egroup{}, \emph{Triangulated categories in the
  representation theory of finite dimensional algebras}, \textbf{119},
  Cambridge University Press, 1988.

\bibitem{Happel1998b}
\bgroup\scshape{}D.~Happel\egroup{} and \bgroup\scshape{}L.~Unger\egroup{},
  Complements and the generalized {N}akayama conjecture,  in \emph{Algebras and
  modules, {II} ({G}eiranger, 1996)}, \emph{CMS Conf. Proc.} \textbf{24}, Amer.
  Math. Soc., Providence, RI, 1998, pp.~293--310.

\bibitem{Hoshino2003}
\bgroup\scshape{}M.~Hoshino\egroup{} and \bgroup\scshape{}Y.~Kato\egroup{}, An
  elementary construction of tilting complexes,  \emph{J. Pure Appl. Algebra}
  \textbf{177} (2003), 159--175.

\bibitem{Hu2013a}
\bgroup\scshape{}W.~Hu\egroup{}, \bgroup\scshape{}S.~Koenig\egroup{}, and
  \bgroup\scshape{}C.~C. Xi\egroup{}, Derived equivalences from cohomological
  approximations and mutations of $\Phi$-Yoneda algebras,  \emph{Proc. Roy.
  Soc. Edinburgh Sect. A} \textbf{143} (2013), 589--629.

\bibitem{Hu2011}
\bgroup\scshape{}W.~Hu\egroup{} and \bgroup\scshape{}C.~C. Xi\egroup{}, ${\cal
  {D}}$-split sequences and derived equivalences,  \emph{Adv. Math.}
  \textbf{227} (2011), 292--318.

\bibitem{Hu2013}
\bgroup\scshape{}W.~Hu\egroup{} and \bgroup\scshape{}C.~C. Xi\egroup{}, Derived
  equivalences for $\Phi$-Auslander-Yoneda algebras,  \emph{Trans. Amer. Math.
  Soc.} \textbf{365} (2013), 5681--5711.

\bibitem{Iyama2014a}
\bgroup\scshape{}O.~Iyama\egroup{} and \bgroup\scshape{}M.~Wemyss\egroup{},
  Maximal modifications and {A}uslander-{R}eiten duality for non-isolated
  singularities,  \emph{Invent. Math.} \textbf{197} (2014), 521--586.

\bibitem{Keller1994}
\bgroup\scshape{}B.~Keller\egroup{}, Deriving DG categories,  \emph{Ann. Sci.
  {\'E}cole Norm. Sup.} \textbf{27} (1994), 63--102.

\bibitem{Ladkani2010}
\bgroup\scshape{}S.~Ladkani\egroup{}, Perverse equivalences, BB-tilting,
  mutations and applications,  \emph{Preprint} (2010).

\bibitem{Rickard1989a}
\bgroup\scshape{}J.~Rickard\egroup{}, Morita theory for derived categories,
  \emph{J. London Math. Soc.} \textbf{39} (1989), 436--456.

\bibitem{Rudakov1990}
\bgroup\scshape{}A.~N. Rudakov\egroup{}, Exceptional collections, mutations and
  helices,  in \emph{Helices and vector bundles}, \emph{London Math. Soc.
  Lecture Note Ser.} \textbf{148}, Cambridge Univ. Press, Cambridge, 1990,
  pp.~1--6.

\end{thebibliography}

\bigskip

Yiping Chen

\medskip
School of Mathematics and Statistics, Wuhan University, 430072 Wuhan,  China.

{\tt Email: ypchen@whu.edu.cn}

\bigskip
Wei Hu

\medskip
School of Mathematical Sciences,  Laboratory of Mathematics and Complex Systems, Beijing Normal University, 100875 Beijing,  China

\smallskip
and

\smallskip
 Beijing Center for Mathematics and Information Interdisciplinary Sciences, 100048 Beijing, China

 {\tt Email: huwei@bnu.edu.cn}

\end{document}